\newtheorem{theorem}{Theorem}[section]
\newtheorem{claim}{Claim}
\newtheorem{pbm}{Problem}
\newtheorem{question}{Question}
\theoremstyle{definition}
\theoremstyle{remark}
\newcommand{\N}{\mathbb N}
\newcommand{\K}{\mathbb K}
\newcommand{\C}{\mathbb C}
\newcommand{\U}{\mathcal U}
\numberwithin{equation}{section}
\begin{document}

\title[Linear chaos and frequent hypercyclicity]{Linear chaos and frequent hypercyclicity}

\author[Q. Menet]{Quentin Menet}
\address{Département de Mathématique\\
Université de Mons\\
20 Place du Parc\\
7000 Mons, Belgique}
\email{Quentin.Menet@umons.ac.be}
\thanks{The author is a postdoctoral researcher of the Belgian FNRS}

\subjclass[2010]{Primary 47A16}
\keywords{Hypercyclicity; Frequent hypercyclicity; Linear chaos}

\maketitle

\begin{abstract}
We answer one of the main current questions in Linear Dynamics by constructing a chaotic operator on $\ell^1$ which is not $\U$-frequently hypercyclic and thus not frequently hypercyclic. This operator also gives us an example of a chaotic operator which is not distributionally chaotic. We complement this result by showing that every chaotic operator is reiteratively hypercyclic.
\end{abstract}
\maketitle

\section{Introduction}

%Linear Dynamics is the theory of studying the properties of orbits of continuous linear operators on Fréchet spaces.

Let $X$ be a separable infinite-dimensional Fréchet space and $T$ a continuous linear operator on $X$.
We say that $T$ is  \emph{hypercyclic} if there exists a vector $x\in X$ (also called hypercyclic) such that its orbit $\text{Orb}(x,T):=\{T^nx:n\ge 0\}$ is dense in $X$. In other words, $T$ is hypercyclic if there exists a vector $x\in X$ such that for any non-empty open set $U\subset X$ the return set $N(x,U):= \{n\ge 0:T^nx\in U\}$ is non-empty. 

The first example of a hypercyclic operator was given by Birkhoff~\cite{Birkhoff} in 1929. He showed that the translation operators $T_a$ on the space of entire functions $H(\C)$ defined by $T_af(z)=f(z+a)$ are hypercyclic if and only if $a\ne 0$. Another important family of hypercyclic operators was given by Salas~\cite{Salas} who showed that a weighted shift $B_w$ on $\ell^p$ is hypercyclic if and only if the sequence $(w_1\cdots w_n)_n$ is unbounded.

Hypercyclic operators have been actively investigated over the last three decades (see \cite{2Bayart2,2Grosse2}). For instance, we now know that the set of hypercyclic vectors for $T^n$ coincides with the set of hypercyclic vectors for $T$~\cite{Ansari}, that each separable infinite-dimensional Fréchet space supports a hypercyclic operator~\cite{4Bonet2},  that there exists a hypercyclic operator $T$ such that $T\oplus T$ is not hypercyclic~\cite{2Read},... 

In the last decade, attention has been given to the frequency with which the orbit of a hypercyclic vector meets each non-empty open set. In 2004, Bayart and Grivaux~\cite{2Bayart0} introduced the notion of frequently hypercyclic operators. An operator $T$ is said to be \emph{frequently hypercyclic} if there exists a vector $x\in X$ (also called frequently hypercyclic) such that for any non-empty open set $U\subset X$ the return set $N(x,U)$ is a set of positive lower density, where the lower density of a set $A$ of non-negative integers  is given by
\[\underline{\text{dens}}(A):=\liminf_{N\to \infty}\frac{\#(A\cap\mathopen[0,N\mathclose])}{N+1}.\]
Several classical hypercyclic operators are in fact frequently hypercyclic~\cite{2Bayart}: the translation operators on $H(\C)$, the derivative operator on $H(\C)$,... We even know a characterization of frequently hypercyclic weighted shifts on $\ell^p$ thanks to Bayart and Ruzsa~\cite{BayartR}: a weighted shift $B_w$ on $\ell^p$ is frequently hypercyclic if and only if
\[\sum_{n=1}^{\infty}\frac{1}{|w_1\cdots w_n|^p}<\infty.\]
%Frequent hypercyclicity also created a bridge between ergodic theory and linear dynamics.

Two other quantifications of the frequency of visits of an orbit were investigated by Shkarin~\cite{Shkarin} and by Bès, Peris, Puig and the author~\cite{Bes}: $\U$-frequent hypercyclicity and reiterative hypercyclicity. An operator $T$ is said to be \emph{$\U$-frequently hypercyclic} (resp. \emph{reiteratively hypercyclic}) if there exists a vector $x\in X$ such that for any non-empty open set $U\subset X$ the return set $N(x,U)$ is a set of positive upper density (resp. a set of positive upper Banach density). We recall that the upper density of a set $A$ is given by
\[\overline{\text{dens}}(A):=\limsup_{N\to \infty}\frac{\#(A\cap\mathopen[0,N\mathclose])}{N+1}\]
and the upper Banach density of $A$ is given by
\[\overline{\text{Bd}}(A):=\lim_{N\to \infty}\frac{a_N}{N}\quad \text{with}\quad a_N:=\limsup_k\#(A\cap \mathopen[k+1,k+N\mathclose]).\]
We remark that by definition every frequently hypercyclic operator is $\U$-frequently hypercyclic and that every $\U$-frequently hypercyclic operator is reiteratively hypercyclic.

On the other hand, one can be interested in the existence of periodic vectors, i.e. the existence of vectors $x$ for which there exists $N>0$ such that $T^Nx=x$. The behaviour of the orbit of a periodic point is obviously very different from the behaviour of the orbit of a hypercyclic vector. In fact, a hypercyclic operator with a dense set of periodic points is said to be \emph{chaotic}. It means that $T$ is chaotic if and only if we can find in each non-empty open set some vector with a dense orbit and some periodic vector. The translation operators and the derivative operator on $H(\C)$ are examples of chaotic operators~\cite{3God}. A characterization of chaotic weighted shifts is also known~\cite{2Grosse3,2Martinez}. In particular, we know thanks to Bayart and Ruzsa~\cite{BayartR} that a weighted shift on $\ell^p$ is chaotic if and only if it is frequently hypercyclic.

One can wonder if there exists a link between frequent hypercyclicity and chaos. Indeed, on the one hand, we have the existence of vectors which visit frequently each non-empty open set and, on the other hand, we have the existence of vectors which visit each non-empty open set and of vectors which visit infinitely the same vectors. Moreover each of these two notions is related to the existence of sufficiently many eigenvectors associated to eigenvalues of modulus~$1$. For instance, if $X$ is a complex vector space, it is well known that the set of periodic points of $T$ is given by
\[\text{span}\{x\in X: Tx=\lambda x\ \text{for some root of unity $\lambda\in \mathbb{C}$}\}.\]
On the other hand, Grivaux~\cite{Grivaux11} showed that if  $X$ is a complex Banach space and if the eigenvectors of $T$ associated to eigenvalues of modulus~$1$ are perfectly spanning, then $T$ is frequently hypercyclic. A first answer was given by Bayart and Grivaux~\cite{BGrivaux}. They showed that there exists a weighted shift on $c_0$ which is frequently hypercyclic but not chaotic. However we cannot hope to construct a chaotic weighted shift on $c_0$ which is not frequently hypercyclic since each chaotic weighted shift on $c_0$ is frequently hypercyclic~\cite{4Bonilla}. It is in fact an important open problem in Linear Dynamics to know if every chaotic operator is frequently hypercyclic (see \cite[Chapter 6]{2Bayart2} and \cite[Chapter 9]{2Grosse2}).
\begin{pbm}
Is every chaotic operator frequently hypercyclic?
\end{pbm}

This question has been first posed by Bayart and Grivaux in \cite[Question 6.4]{BGrivaux} and can be found in many papers~\cite{BM14, JBes, 4Bonilla, RFGP, Grivaux11, KGE7}. Obviously, one can also wonder if there is a link between chaos and $\U$-frequent hypercyclicity or between chaos and reiterative hypercyclicity.

\begin{pbm}
Is every chaotic operator $\U$-frequently/reiteratively hypercyclic?
\end{pbm}

The notion of chaos that we mentioned previously  is generally called chaos in the sense of Devaney in order to make a clear distinction with the other notions of chaos found in the mathematical literature. The first notion of chaos was given by Li and Yorke~\cite{Li}. An operator $T$ is said to be \emph{Li-Yorke chaotic} if there exists an uncountable subset $\Gamma\subset X$ such that for every $x, y\in \Gamma$, $x\ne y$, we have
\[\liminf_n \|T^nx-T^ny\|=0\quad \text{and}\quad \limsup_n \|T^nx-T^ny\|>0,\]
where $\|\cdot\|$ is a F-norm inducing the topology of $X$. 
%In particular, we remark that every hypercyclic operator is Li-Yorke chaotic. Indeed, if $x$ is a hypercyclic vector for $T$, it suffices to consider $\Gamma=\{\lambda x:|\lambda|\le 1\}$.

In 1994, Schweizer and Sm\'{i}tal~\cite{Schweizer} extended the notion of Li-Yorke chaotic operators by introducing the notion of distributionally chaotic operators. An operator $T$ is said to be \emph{distributionally chaotic} if there exist an uncountable subset $\Gamma\subset X$ and $\varepsilon>0$ such that for every $x\ne y\in \Gamma$, for every $\tau>0$, we have
\[\underline{\text{dens}}\{n\ge 0:\|T^nx-T^ny\|<\varepsilon\}=0\quad \text{and}\quad \overline{\text{dens}}\{n\ge 0:\|T^nx-T^ny\|<\tau\}=1.\]

In \cite{Bernardes}, the authors study the notion of distributionally chaotic operators and pose the following question.

\begin{pbm}{\cite[Problem 37]{Bernardes}}\label{pbm 2}
Are there chaotic operators which are not distributionally chaotic?
\end{pbm}

The goal of this paper consists in bringing a complete answer to each of the problems mentioned above thanks to the following two theorems.

\begin{theorem}\label{chaosreit}
Every chaotic operator on a separable infinite-dimensional Fréchet space is reiteratively hypercyclic.
\end{theorem}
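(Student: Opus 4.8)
The plan is to prove reiterative hypercyclicity by a Baire category argument, producing a dense $G_\delta$ set of reiteratively hypercyclic vectors. Fix a countable base $(U_i)$ of nonempty open subsets of $X$; it suffices to attach to each $i$ a number $\delta_i>0$ and to show that the set
\[O_{i,k}:=\Big\{x\in X:\ \exists\ \text{an interval }I\ \text{with }\#I\ge k\ \text{and}\ \#\big(N(x,U_i)\cap I\big)\ge \delta_i\,\#I\Big\}\]
is dense and open for every $k\ge 1$. Indeed, membership in $O_{i,k}$ for all $k$ forces $\overline{\text{Bd}}(N(x,U_i))\ge\delta_i>0$, and since $X$ is a separable Fr\'echet space it is a Baire space, so $\bigcap_{i,k}O_{i,k}$ is residual, hence nonempty; any of its elements is reiteratively hypercyclic. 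The openness of each $O_{i,k}$ is routine: for a fixed interval $I$, the requirement that $T^nx\in U_i$ for at least $\delta_i\#I$ of the indices $n\in I$ is a finite union of finite intersections of the open conditions $\{x:T^nx\in U_i\}$, and one then takes the union over the countably many admissible intervals.

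The heart of the matter is the density of $O_{i,k}$, and this is where chaos enters. Writing $U=U_i$, I would first use the density of the periodic points to choose a periodic point $p\in U$, say of period $N$, and set $\delta_i:=1/(N+1)$. Given an arbitrary nonempty open set $W$ and a target length $k$, put $L:=\lceil k/N\rceil$. Since $T^{jN}p=p\in U$ for every $j$ and each $T^{jN}$ is continuous, for each $0\le j\le L$ there is an open neighbourhood $V_j$ of $p$ with $T^{jN}(V_j)\subseteq U$; set $V:=\bigcap_{j=0}^{L}V_j$, still an open neighbourhood of $p$. Because $T$ is hypercyclic on a separable Fr\'echet space it is topologically transitive, so there exist $m\ge 0$ and $x_0\in W$ with $T^mx_0\in V$. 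Then for every $0\le j\le L$,
\[T^{m+jN}x_0=T^{jN}\big(T^mx_0\big)\in T^{jN}(V)\subseteq U,\]
so the interval $I=[m,\,m+LN]$, which contains $LN+1\ge k$ integers, satisfies $\#\big(N(x_0,U)\cap I\big)\ge L+1\ge \#I/(N+1)=\delta_i\#I$. As $x_0\in W$, this shows $W\cap O_{i,k}\ne\emptyset$, i.e. $O_{i,k}$ is dense.

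I would emphasise that the decisive point — and the only place where an obstacle of error amplification could arise — is that upper Banach density requires the existence of merely arbitrarily long \emph{finite} windows of positive density, not an asymptotic or even upper density. Consequently, for each fixed target length $k$ I only need finitely many of the powers $T^{jN}$ (namely $0\le j\le L$) to map a neighbourhood of $p$ back into $U$, which is guaranteed by continuity together with the exact periodicity $T^{N}p=p$; no control of $T^{jN}$ for large $j$ is needed. Had the goal been frequent or even $\U$-frequent hypercyclicity, one would have to handle unboundedly many $j$ simultaneously, and the errors $T^{jN}(T^mx_0-p)$ would in general fail to stay inside $U$ — which is precisely the reason those stronger properties can break down for chaotic operators, as the companion construction of this paper demonstrates.
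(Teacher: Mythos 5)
Your proof is correct, and its engine is exactly the one in the paper: pick a periodic point $p\in U$ of period $N$, use continuity to pull the finitely many maps $T^{jN}$, $0\le j\le L$, back into a single neighbourhood $V$ of $p$, and use transitivity to enter $V$, thereby planting an arithmetic progression of common difference $N$ and length $L+1$ inside a return set; this is precisely how the paper obtains $\overline{\text{Bd}}(N(x,U))\ge \frac1d$. Where you genuinely differ is the packaging. You run a Baire category argument with the open sets $O_{i,k}$ over a countable base, producing a residual set of reiteratively hypercyclic vectors, whereas the paper dispenses with Baire entirely: it fixes an \emph{arbitrary} hypercyclic vector $x$, observes that $U_n=\bigcap_{l=0}^{n}T^{-ld}U$ is open and non-empty (it contains the periodic point $z$), and uses only $N(x,U_n)\neq\emptyset$ to place the progression $\{k_n+ld:0\le l\le n\}$ inside $N(x,U)$. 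The direct version is shorter (no openness check, no per-interval counting) and yields a formally stronger conclusion --- every hypercyclic vector is already reiteratively hypercyclic, in line with the result of \cite{Bes} that for reiteratively hypercyclic operators the reiteratively hypercyclic vectors coincide with the hypercyclic ones --- and it subsumes your residuality statement, since the hypercyclic vectors form a residual set. Your version, in exchange, makes explicit the quantitative structure ($\delta_i=1/(N+1)$ per basic open set) and isolates cleanly why the argument cannot be upgraded to $\U$-frequent hypercyclicity, a point the paper leaves to its counterexample. One small step both treatments leave implicit: with the paper's definition $\overline{\text{Bd}}(A)=\lim_N a_N/N$, $a_N=\limsup_k\#(A\cap[k+1,k+N])$, passing from ``arbitrarily long intervals (or progressions) of density at least $\delta$'' to $\overline{\text{Bd}}(A)\ge\delta$ needs the routine remark that if the witnessing intervals sit at bounded positions then already $\overline{\text{dens}}(A)\ge\delta$, while if their positions are unbounded one subdivides them into far-out windows of fixed length; this is standard and affects your argument and the paper's equally.
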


\begin{theorem}\label{chaosfhc}
There exists a chaotic operator $T$ on $\ell^1$ which is neither $\U$-frequently hypercyclic nor distributionally chaotic. In particular, $T$ is chaotic and not frequently hypercyclic.
\end{theorem}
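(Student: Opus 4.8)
The plan is to build $T$ on $\ell^1(\N)$ by a block construction that simultaneously forces a dense supply of periodic points and a shift-type mechanism for hypercyclicity, while retaining enough quantitative control to obstruct positive upper density of returns; weighted shifts are excluded at the outset, since the excerpt records that a chaotic shift on $\ell^p$ is already frequently hypercyclic. Concretely, I would fix a partition of $\N$ into consecutive finite blocks $\Delta_1,\Delta_2,\dots$ of lengths $b_1<b_2<\cdots$ growing very fast, a weight sequence $(w_n)$, and an auxiliary assignment $\varphi:\N\to\N$, and define $T$ on the canonical basis $(e_k)$ so that it acts as a weighted forward shift inside each block and couples the last coordinate of a block back into a distinguished coordinate (governed by $\varphi$) with weight $w_n$. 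The parameters $(b_n)$, $(w_n)$, $(\varphi)$ and the coupling coefficients are the knobs to be tuned at the end; the point of this architecture is that it produces, for every root of unity $\lambda$, explicit eigenvectors supported on initial segments of the blocks, while keeping the growth of orbits traceable block by block.

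For the chaos I would first check density of periodic points: by choosing the coupling so that each $e_k$ is well approximated by finite combinations of eigenvectors attached to roots of unity, the subspace $\mathrm{span}\{x:Tx=\lambda x,\ \lambda^m=1\ \text{for some }m\}$ is dense in $\ell^1$, and the displayed description of the periodic points in the complex case then gives a dense set of periodic vectors. Hypercyclicity I would obtain from the Hypercyclicity Criterion along a sparse increasing sequence $(n_j)$: on the dense set of finitely supported vectors one arranges $T^{n_j}\to 0$ and builds right inverses $S^{n_j}\to 0$ with $T^{n_j}S^{n_j}\to \mathrm{id}$, using that the products of weights over the shift can be made unbounded (so no forward-invariant obstruction survives) while remaining summable enough for $T$ to be bounded on $\ell^1$.

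The heart of the matter, and the step I expect to be the main obstacle, is the quantitative estimate that breaks $\U$-frequent hypercyclicity. I would single out one open set $U$, a small ball around a fixed basis vector $e_{k_0}$, and prove that no vector can simultaneously have a dense orbit and return to $U$ along a set of positive upper density. Since any $\U$-frequently hypercyclic vector must, for this particular $U$, have return set of positive upper density, and since in an infinite-dimensional space a dense orbit is unbounded, this would rule out $\U$-frequent hypercyclicity. The mechanism is a mass-transport bookkeeping: being within a fixed distance of $e_{k_0}$ forces a definite amount of $\ell^1$-mass to sit at coordinate $k_0$; the shift moves this mass forward through the block containing $k_0$, and because the $b_n$ and $w_n$ grow fast, re-delivering the required mass to $k_0$ for a genuinely spreading orbit can occur only after increasingly long time windows, so the proportion of admissible return times tends to $0$. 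The delicate points here are calibrating the weights to be large enough for hypercyclicity yet small enough that the delivery cost still forces density zero while $T$ stays bounded, and separating this genuine spreading from the harmless periodic recycling of an orbit that merely cycles through a neighbourhood of $e_{k_0}$; this balancing act is where I expect the real work to lie.

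Finally I would deduce the failure of distributional chaos from the same circle of estimates, working directly from the definition. A distributionally chaotic pair would produce a nonzero $z=x-y$ with $\overline{\mathrm{dens}}\{n:\|T^nz\|<\tau\}=1$ for every $\tau>0$ and $\underline{\mathrm{dens}}\{n:\|T^nz\|<\varepsilon\}=0$ for some $\varepsilon>0$. I would show these two requirements are incompatible for the constructed operator: the block-by-block control on how fast and how often the orbit of a fixed nonzero vector can pass between small and large norm prevents the orbit from clustering near the origin on a set of upper density one unless it essentially converges to $0$, which is in turn incompatible with staying bounded away from $0$ on a set of upper density one. Combining this with the already established chaos yields a chaotic operator on $\ell^1$ that is neither $\U$-frequently hypercyclic nor distributionally chaotic, and \emph{a fortiori} not frequently hypercyclic, settling the three problems at once.
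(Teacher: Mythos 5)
Your block architecture is recognizably the one the paper uses (a weighted shift inside consecutive blocks, with the last coordinate of each block coupled back through a map $\varphi$), but the decisive step --- showing that some return set has upper density zero for \emph{every} hypercyclic vector --- is missing, and the open set you chose to witness it is the wrong one. You take a small ball around a basis vector $e_{k_0}$ and sketch a mass-transport argument, explicitly deferring the problem of ``harmless periodic recycling''; that problem is fatal for this choice: in any construction of this kind the finitely supported vectors are (or approximate) periodic points --- in the paper every $e_k$ itself is periodic --- so vectors near $e_{k_0}$ return to your ball along arithmetic progressions, and nothing in your bookkeeping distinguishes a hypercyclic orbit passing near $e_{k_0}$ from this recycling. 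The paper's witness set is instead a ball around the \emph{origin}: via the norm estimates of Claims~\ref{fhc0}--\ref{fhc2} and the cascade argument of Claim~\ref{prelim}, it proves that every hypercyclic $x$ satisfies $\underline{\text{dens}}\{j\ge 0:\|T^jx\|\ge \|X_{l_0}\|/4\}=1$ for some $l_0$ with $\|X_{l_0}\|>0$, so the returns to a small ball around $0$ have upper density zero. Density of the orbit enters exactly once and in a precise way: $T^jx$ must approximate the single vector $(K+\tfrac{1}{2}\|X_n\|)e_{b_n}$ with $K=\sup_j\|P_nT^jP_nx\|$, which forces a cross-block transfer $\sum_{l>n}\|P_nT^jP_lx\|>\|X_n\|/4$ at some time; iterating yields indices $l_0<l_1<\cdots$ whose transfer times satisfy $j_n>b_{l_n+1}-b_{l_n}-\delta_{l_n}$, while before time $j_{n+1}$ the diagonal part $P_{l_n}T^jP_{l_n}x$ keeps $\|T^jx\|$ large off an exceptional set of times of proportion about $2\delta_{l_n}/(b_{l_n+1}-b_{l_n})\to 0$. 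None of this mechanism is present, even in outline, in your sketch; your sentence that the balancing act ``is where I expect the real work to lie'' names the theorem rather than proving a step of it.

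Two further points. Your route to hypercyclicity via the Hypercyclicity Criterion with $T^{n_j}\to 0$ on finitely supported vectors cannot coexist with the paper's way of producing dense periodic points: there every finite sequence is periodic (Claim~\ref{claimper2}), so no subsequence of $(T^n)$ tends to $0$ on them. The paper instead proves topological transitivity by hand (Claims~\ref{hyp0}~and~\ref{hyp}), approximating each $x_ke_k$ by $T^{lN+M}(z_me_m)$ with $z_m$ small and with control of the exponent modulo a prescribed period, so that blocks already placed are left invariant; this congruence control is what replaces the Criterion. If you instead obtain dense periodic points from unimodular eigenvectors, as your first paragraph suggests, you have changed the operator, and all the quantitative claims underlying the density-zero argument would have to be re-derived with no indication they survive. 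Finally, your argument against distributional chaos is directionally the paper's (Claim~\ref{cool}: for every $x\ne 0$ there is $\tau>0$ with $\underline{\text{dens}}\{j\ge 0:\|T^jx\|\ge\tau\}>0$, applied to $z=x-y$, contradicting the density-one proximality in the definition), but it rests entirely on the same unestablished estimates, so it inherits the gap above.
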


%\begin{theorem}\label{distchaos}
%There exists a chaotic operator on $\ell^1$ which is not distributionally chaotic.
%\end{theorem}

\section{Proof of Theorem~\ref{chaosreit}}
Let $X$ be a separable infinite-dimensional Fréchet space, $T$ a chaotic operator on $X$ and $x$ a hypercyclic vector for $T$. We show that for any non-empty open subset $U$ of $X$, the return set $N(x,U)$ is a set of positive upper Banach density.

Let $U$ be a non-empty open subset of $X$. Since $T$ is chaotic, there exist a periodic point $z\in U$ and a positive integer $d$ such that $T^dz=z$. We remark that for every $n\ge 0$ the set $U_n:=\bigcap_{l=0}^nT^{-ld}U$ is a non-empty open set since $U_n$ contains $z$ and $T$ is continuous. In particular, for every $n\ge 0$, the set $N(x,U_n)$ is non-empty, i.e. there exists $k_n\ge 0$ such that $T^{k_n+ld}x\in U$ for every $l\le n$. This implies that
\[N(x,U)\supset \bigcup_{n\ge 0}\{k_n+ld: 0\le l\le n\}.\]
We conclude that $\overline{\text{Bd}}(N(x,U))\ge \frac{1}{d}$ and thus that $T$ is reiteratively hypercyclic.\qed

\section{Proof of Theorem~\ref{chaosfhc}}\label{nonexist}
\subsection{Construction of the operator $T$}
Several important problems in Linear Dynamics have been solved thanks to the construction of a convenient upper-triangular perturbation of a weighted forward shift. For instance, Read~\cite{Read} constructed an upper-triangular perturbation of a weighted forward shift on $\ell^1$ for which every non-zero vector is cyclic and thereby solved in the negative the invariant subspace problem for $\ell^1$. In 2006, De La Rosa and Read~\cite{2Read} solved in the negative the Hypercyclicity Criterion problem by constructing a Banach space $X$ and a hypercyclic operator $T$ on $X$ such that $T$ is not weakly mixing, i.e. $T\oplus T$ is not hypercyclic. One could wonder if such operators also exist on some classical Banach spaces. The answer was given by Bayart and Matheron~\cite{BayartM} who constructed a hypercyclic operator $T$ on $\ell^1$ such that $T\oplus T$ is not hypercyclic by considering a convenient upper-triangular perturbation of a weighted forward shift.

More precisely, Bayart and Matheron consider an increasing sequence of non-negative integers $(b_n)_{n\ge 0}$ with $b_0=0$  and an operator $T$ of the form
\[Te_k=
\left\{\begin{array}{cl}
 w_{k+1}e_{k+1} & \quad\text{if}\ k\in \mathopen[b_n,b_{n+1}-1\mathclose[\\
\varepsilon_n e_{b_n}+f_n & \quad \text{if}\ k=b_{n}-1
   \end{array}\right.\]
   where $f_n=\sum_{k=0}^{b_{n}-1}f_{n,k}e_k$ and $\varepsilon_n>0$.
They then prove that for a convenient choice of sequences $(b_n)$, $(w_n)$, $(\varepsilon_n)$ and $(f_n)$, $T\oplus T$ is not hypercyclic and $e_0$ is a hypercyclic vector for $T$. We can remark that the positivity of each real number $\varepsilon_n$ is an essential assumption if we want that $e_0$ can have a dense orbit.

The starting point of our construction will be a little bit different since we will construct an operator $T$ such that each vector $e_n$ is a periodic point of~$T$. We will thus consider an operator $T$ of the form 
\[Te_k=
\left\{\begin{array}{cl}
 w_{k+1}e_{k+1} & \quad\text{if}\ k\in \mathopen[b_n,b_{n+1}-1\mathclose[\\
 f_n & \quad \text{if}\ k=b_{n}-1
   \end{array}\right.\]
   where $f_n=\sum_{k=0}^{b_{n}-1}f_{n,k}e_k$.

More precisely, we consider the operator defined as follows:

\[Te_k=
\left\{\begin{array}{cl}
 2e_{k+1} & \quad\text{if}\ k\in \mathopen[b_n,b_n+\delta_n\mathclose[\\
 e_{k+1}& \quad\text{if}\ k\in \mathopen[b_n+\delta_n,b_{n+1}-1\mathclose[\\
 \frac{1}{2^{\tau_n}}e_{b_{\varphi(n)}}-\frac{1}{2^{\delta_n}}e_{b_n} & \quad \text{if}\ k=b_{n+1}-1\ \text{with} \ n\ge 1\\
 -e_0& \quad \text{if}\ k=b_1-1
   \end{array}\right.\]
   where
   \begin{itemize}
   \item $\varphi$ is a map from $\N_0$ to $\N_0$ satisfying $\varphi(0)=0$ and for any $n\ge 0$
      \begin{eqnarray}
       \varphi(n+1)<n+1 \quad \text{and} \quad \#\varphi^{-1}(n)=\infty;
       \label{varphi}
   	  \end{eqnarray}
   \item $(\delta_n)_{n\ge 0}$ and $(\tau_n)_{n\ge 1}$ are increasing sequences of positive integers with ${\delta_0=0}$ satisfying for any $n\ge 1$
      \begin{eqnarray}
       \delta_n-\tau_n\to \infty;
       \label{deltatau1}
      \end{eqnarray}   
      \begin{eqnarray}
       \tau_n\ge\delta_{n-1}+2(n+1);
       \label{deltatau2}
      \end{eqnarray}  
   \item $(b_n)_{n\ge 0}$ is an increasing sequence of integers with $b_0=0$ such that for any $n\ge 1$
      \begin{eqnarray}
       b_{n+1}-b_n=2N_n(b_n-b_{n-1}) \ \text{for some integer $N_n\ge 1$};
       \label{bmultiple}
   	  \end{eqnarray} 
   	   \begin{eqnarray}
       2\delta_n <b_{n+1}-b_n;
       \label{b1} 
   	  \end{eqnarray}
   	  \begin{eqnarray}
   	  \frac{\delta_n}{b_{n+1}-b_n}\searrow 0.
   	  \label{b2}
   	  \end{eqnarray} 	  
   \end{itemize}

Each of these assumptions is satisfied if we consider, for instance, for every $n\ge 1$
\[\tau_n= 4^{n+1},\quad \delta_n=2 \tau_n\quad\text{and}\quad b_{n}-b_{n-1}=4^{2n+1}.\]
%The assumptions \eqref{varphi} and \eqref{bmultiple} will imply that each vector $e_n$ is a periodic point of $T$, the assumptions \eqref{varphi} and \eqref{deltatau1} will imply that $T$ is hypercyclic and  the assumptions \eqref{deltatau2}, \eqref{b1} and \eqref{b2} will allow us to prove that $T$ is neither $\U$-frequently hypercyclic nor distributionally chaotic.
We remark that the assumption \eqref{b1} implies that $b_n+\delta_n\le b_{n+1}-1$. The vector $Te_k$ is thus well-defined for any $k\ge 0$. Moreover, since for any $k\ge 0$ we have $\|T e_k\|\le 2$, we can extend the definition of $T$ to $\ell^1$ by letting for any $(x_k)_{k\ge 0}\in \ell^1$
\[T\Big(\sum_{k=0}^{\infty}x_k e_k\Big)=\sum_{k=0}^{\infty}x_kT(e_k)\]
and we conclude that $T$ is a continuous operator on $\ell^1$ with $\|T\|= 2$.

The remainder of this section is devoted to showing that $T$ is chaotic (Section~\ref{Tchaotic}) and that $T$ is neither $\U$-frequently hypercyclic (Section~\ref{TpasUfhc}) nor distributionally chaotic (Section~\ref{Tpasdist}).

\subsection{$T$ is chaotic}\label{Tchaotic}
We first prove that each vector $e_k$ is a periodic point of $T$.

\begin{claim}\label{claimper} Let $n \ge 0$. If $k\in \mathopen[b_n,b_{n+1}\mathclose[$ then
\begin{eqnarray}
T^{2(b_{n+1}-b_n)}e_k=e_k.
\label{period}
\end{eqnarray}
\end{claim}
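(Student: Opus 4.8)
The plan is to prove the claim by strong induction on $n$. Write $L_n:=b_{n+1}-b_n$ for the length of the $n$-th block $\mathopen[b_n,b_{n+1}\mathclose[$ and $a^+:=\max(a,0)$; I will show that $T^{2L_n}$ fixes every $e_k$ with $k\in\mathopen[b_n,b_{n+1}\mathclose[$. The reason one must pass to the period $2L_n$ rather than $L_n$ is that, if one ignores the coupling term, each block behaves like a finite cyclic weighted shift whose total weight product around the cycle equals $-1$: the $\delta_n$ forward steps of weight $2$ contribute $2^{\delta_n}$, the remaining forward steps contribute $1$, and the coefficient $-1/2^{\delta_n}$ of $e_{b_n}$ in $Te_{b_{n+1}-1}$ multiplies these to $-1$. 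For the base case $n=0$ there is no coupling term at all, since $Te_{b_1-1}=-e_0$ stays inside the block, so block $0$ is a genuine cyclic shift of length $L_0=b_1$ with weight product $-1$; hence $T^{L_0}e_k=-e_k$ and $T^{2L_0}e_k=e_k$ for every $k\in\mathopen[0,b_1\mathclose[$.

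For the inductive step I would fix $i\in\mathopen[0,L_n\mathclose[$ and follow the orbit of $e_{b_n+i}$ for exactly $L_n$ steps in three stages: first shift forward to $e_{b_{n+1}-1}$ in $L_n-1-i$ steps, accumulating the weight $2^{(\delta_n-i)^+}$; then apply the coupling rule at index $b_{n+1}-1$, producing $2^{(\delta_n-i)^+-\tau_n}e_{b_{\varphi(n)}}-2^{(\delta_n-i)^+-\delta_n}e_{b_n}$; then shift the $e_{b_n}$-component forward $i$ more steps, accumulating $2^{\min(i,\delta_n)}$. A one-line check shows $(\delta_n-i)^+-\delta_n+\min(i,\delta_n)=0$ for every $i$, so the returning component is exactly $-e_{b_n+i}$, independently of $i$; that is,
\[T^{L_n}e_{b_n+i}=-e_{b_n+i}+C_i\,T^{i}e_{b_{\varphi(n)}},\qquad C_i:=2^{(\delta_n-i)^+-\tau_n}.\]
Applying $T^{L_n}$ a second time and using that powers of $T$ commute, the claim follows the moment we know $T^{L_n}e_{b_{\varphi(n)}}=e_{b_{\varphi(n)}}$, since then $T^{2L_n}e_{b_n+i}=-T^{L_n}e_{b_n+i}+C_iT^{i}\bigl(T^{L_n}e_{b_{\varphi(n)}}\bigr)=e_{b_n+i}-C_iT^{i}e_{b_{\varphi(n)}}+C_iT^{i}e_{b_{\varphi(n)}}=e_{b_n+i}$, the two coupling contributions cancelling.

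It remains to secure $T^{L_n}e_{b_{\varphi(n)}}=e_{b_{\varphi(n)}}$, and this is where the hypotheses on $(b_n)$ and $\varphi$ enter and where I expect essentially all of the friction. Since $\varphi(n)<n$ by \eqref{varphi}, the inductive hypothesis yields $T^{2L_{\varphi(n)}}e_{b_{\varphi(n)}}=e_{b_{\varphi(n)}}$, so it suffices that $2L_{\varphi(n)}\mid L_n$. This comes from \eqref{bmultiple}: iterating $L_m=2N_mL_{m-1}$ gives $L_n/L_m=2^{\,n-m}\prod_{j=m+1}^{n}N_j$ for $m<n$, so $2L_m\mid L_n$ whenever $m<n$, in particular for $m=\varphi(n)$. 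The only care needed is thus bookkeeping: checking that the forward shifts in the inductive step never prematurely reach the coupling index $b_{n+1}-1$ (guaranteed by $i\le L_n-1$), that the coupling always feeds a strictly lower block $\varphi(n)<n$ so the induction applies to $e_{b_{\varphi(n)}}$ without our ever needing to locate $T^{i}e_{b_{\varphi(n)}}$, and that the weight exponents combine to $0$ as claimed. None of these is deep, but they must all be assembled correctly.
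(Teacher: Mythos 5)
Your proof is correct and takes essentially the same route as the paper's: strong induction on $n$, with block $0$ as a weight-$(-1)$ cycle, the divisibility $2(b_{\varphi(n)+1}-b_{\varphi(n)})\mid b_{n+1}-b_n$ coming from \eqref{bmultiple} together with $\varphi(n)<n$ to fix $e_{b_{\varphi(n)}}$ under $T^{b_{n+1}-b_n}$, and the cancellation of the coupling term (your exponent identity $(\delta_n-i)^+-\delta_n+\min(i,\delta_n)=0$ is exactly the paper's $2^{j_k}2^{\delta_n-j_k}/2^{\delta_n}=1$). The only difference is bookkeeping: the paper splits the $2(b_{n+1}-b_n)$ iterations as $(b_{n+1}-k)+(b_{n+1}-b_n)+(k-b_n)$ and cancels the $e_{b_{\varphi(n)}}$-terms midway, whereas you compute $T^{b_{n+1}-b_n}e_{b_n+i}=-e_{b_n+i}+C_iT^ie_{b_{\varphi(n)}}$ and cancel after the second application.
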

\begin{proof}
If $k\in \mathopen[b_0,b_1\mathclose[$, then by definition of $T$, we have
\[T^{b_1-b_0}(e_k)=-e_k\quad \text{and thus}\quad T^{2(b_1-b_0)}{e_k}=e_k.\]
It remains to prove that for every $n\ge 1$, if \eqref{period} is satisfied for every $l\in \mathopen[0,b_{n}\mathclose[$ then \eqref{period} is satisfied for every $k\in \mathopen[b_n,b_{n+1}\mathclose[$. 

Let $n\ge 1$ and $k\in \mathopen[b_n,b_{n+1}\mathclose[$. We assume that \eqref{period} is satisfied for every $l\in \mathopen[0,b_{n}\mathclose[$. If we let $j_k:=\#\mathopen[k,b_{n}+\delta_{n}\mathclose[=\#\mathopen[k-b_n,\delta_{n}\mathclose[$, we have by definition of $T$
\[T^{b_{n+1}-k}e_k=\frac{2^{j_k}}{2^{\tau_n}}e_{b_{\varphi(n)}}-\frac{2^{j_k}}{2^{\delta_n}}e_{b_n}.\]
Since $\varphi(n)<n$ and $b_{n+1}-b_{n}$ is a multiple of $2(b_{\varphi(n)+1}-b_{\varphi(n)})$, we deduce from our induction hypothesis that
\begin{align*}
T^{b_{n+1}-b_n}(T^{b_{n+1}-k}e_k)&=\frac{2^{j_k}}{2^{\tau_n}}e_{b_{\varphi(n)}}-\frac{2^{j_k}}{2^{\delta_n}}T^{b_{n+1}-b_n}e_{b_n}\\
&=\frac{2^{j_k}}{2^{\tau_n}}e_{b_{\varphi(n)}}-\frac{2^{j_k}}{2^{\delta_n}}\left(\frac{2^{\delta_n}}{2^{\tau_n}}e_{b_{\varphi(n)}}-e_{b_n}\right)\\
&=\frac{2^{j_k}}{2^{\delta_n}}e_{b_n}.
\end{align*}
We conclude that
\begin{align*}
T^{2(b_{n+1}-b_n)}e_k&=\frac{2^{j_k}}{2^{\delta_n}}T^{k-b_{n}}e_{b_n}\\
&=\frac{2^{j_k}}{2^{\delta_n}}2^{\#\{\mathopen[0,k-b_{n}\mathclose[\cap\mathopen[0,\delta_n\mathclose[\}}e_{k}\\
&=\frac{2^{j_k}}{2^{\delta_n}}2^{\delta_n-j_k}e_{k}=e_k.
\end{align*}
\end{proof}

We easily deduce the following assertion from \eqref{bmultiple} and Claim~\ref{claimper}.

\begin{claim}\label{claimper2} Let $x\in \ell^1$. If for any $k\ge b_{n+1}$ we have $x_k=0$ then \[T^{2(b_{n+1}-b_n)}x=x.\] In particular, every finite sequence is a periodic point of $T$.
\end{claim}

We now show that $T$ is hypercyclic. To this end, we first prove the following claim.

\begin{claim}\label{hyp0}
Let $\varepsilon>0$, let $k\ge 0$, $N\ge 1$ and $0\le M<N$ be integers and let $x_k\in \K$.
Then there exist $m\ge 0$, $l\ge 0$ and $z_m\in \K$ such that
\begin{eqnarray}
\label{epsilon}
|z_m|<\varepsilon \quad \text{and} \quad \|T^{lN+M}(z_me_m)-x_ke_k\|<\varepsilon.
\end{eqnarray}
\end{claim}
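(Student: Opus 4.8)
The plan is to exploit the amplification produced by the factor-$2$ parts of the blocks: I will reach a large multiple of $e_k$ by starting from a vector $e_m$ lying deep inside a much later block, so that the required coefficient $z_m$ can be taken as small as we like. Write $k\in\mathopen[b_n,b_{n+1}\mathclose[$. Since $\#\varphi^{-1}(n)=\infty$ by \eqref{varphi}, I may fix $j\ge 1$ as large as needed with $\varphi(j)=n$. The trajectory I have in mind starts at a vector $e_{b_j+r}$ inside the factor-$2$ region of block $j$, pushes it forward to the end of block $j$ (which multiplies it by $2^{\delta_j-r}$), lets the wrap-around term carry the mass onto $e_{b_n}$, and finally pushes it forward inside block $n$ until it reaches $e_k$.

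Concretely, for $0\le r<\delta_j$ a direct iteration of the definition of $T$ (as in the proof of Claim~\ref{claimper}) yields
\[T^{b_{j+1}-b_j-r}(e_{b_j+r})=2^{\delta_j-r-\tau_j}e_{b_n}-2^{-r}e_{b_j},\]
and then, setting $v:=T^{k-b_n}(e_{b_j})$ --- a vector supported in $\mathopen[b_j,b_{j+1}\mathclose[$ with $\|v\|\le 2^{k-b_n}$, since $k-b_n<b_{j+1}-b_j$ --- one obtains
\[T^{s(r)}(e_{b_j+r})=2^{\delta_j-r-\tau_j+\min(k-b_n,\delta_n)}\,e_k-2^{-r}v,\qquad s(r):=(b_{j+1}-b_j-r)+(k-b_n).\]
The main obstacle is the requirement that the exponent be of the prescribed form $lN+M$, since I have no control over $b_{j+1}-b_j$ modulo $N$. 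This is exactly what the freedom in the starting position $r$ is for: as $r$ runs through $\{0,1,\dots,N-1\}$ the integers $s(r)$ run through $N$ consecutive values and therefore cover every residue class modulo $N$. Hence there is a unique $r^\ast\in\{0,\dots,N-1\}$ with $s(r^\ast)\equiv M\pmod N$, and for $j$ large we have $s(r^\ast)\ge M$, i.e. $s(r^\ast)=lN+M$ for some $l\ge 0$.

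It remains to check the two estimates. Put $m:=b_j+r^\ast$, $C:=2^{\delta_j-r^\ast-\tau_j+\min(k-b_n,\delta_n)}$ and $z_m:=x_k/C\in\K$. Then $z_m\,T^{lN+M}(e_m)=x_ke_k-(x_k/C)2^{-r^\ast}v$, so that
\[\|T^{lN+M}(z_me_m)-x_ke_k\|\le\frac{|x_k|\,2^{k-b_n}}{C}.\]
Since $r^\ast\le N-1$ stays bounded while $\delta_j-\tau_j\to\infty$ by \eqref{deltatau1}, the factor $C$ tends to $\infty$ as $j\to\infty$; consequently both $|z_m|=|x_k|/C$ and the right-hand side above tend to $0$. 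Taking $j$ large enough --- so that in addition $\delta_j\ge N$ (guaranteeing $r^\ast<\delta_j$, i.e. $e_m$ really sits in the factor-$2$ region) and $s(r^\ast)\ge M$ --- makes both quantities smaller than $\varepsilon$, which gives \eqref{epsilon}. The only genuinely computational point is the bookkeeping of the powers of $2$ while iterating through the factor-$2$ and factor-$1$ regions, which is routine.
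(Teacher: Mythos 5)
Your proof is correct and follows essentially the same route as the paper's: starting inside the doubling region of a far block $j\in\varphi^{-1}(n)$ with $\delta_j-\tau_j$ large, riding the wrap-around term $\frac{1}{2^{\tau_j}}e_{b_{\varphi(j)}}-\frac{1}{2^{\delta_j}}e_{b_j}$ onto $e_{b_n}$ and pushing forward to $e_k$, and fixing the residue modulo $N$ by shifting the starting index through $r\in\{0,\dots,N-1\}$. The only difference is cosmetic: you start at the bottom of the doubling region and let the amplification $C$ blow up as $j\to\infty$, whereas the paper starts at the calibrated depth $m=b_t+\delta_t-\tau_t-s-r$ so that the single hypothesis $\delta_t-\tau_t\ge s+N+b_{n+1}-b_n$ controls $|z_m|$ and the error simultaneously.
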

\begin{proof}
Let $n\ge 0$ such that $k\in \mathopen[b_n,b_{n+1}\mathclose[$.
In view of \eqref{varphi} and \eqref{deltatau1}, we can consider a positive integer $s$ such that $|x_k|<\varepsilon 2^s$ and a positive integer $t$ such that
\begin{eqnarray}\label{m}
\varphi(t)=n\quad\text{and}\quad \delta_t-\tau_t\ge s+N+b_{n+1}-b_n.
\end{eqnarray}
We then let $m=b_t+\delta_t-\tau_t-s-r$ where $0\le r<N$ satisfies \[(b_{t+1}-m+k-b_n)\mod N=M.\]
We remark that $m\in \mathopen[b_{t},b_{t}+\delta_t\mathclose]$. Indeed, we deduce from \eqref{m} that
\[b_t+\delta_t\ge m\ge b_t+\delta_t-\tau_t-s-N\ge b_t+b_{n+1}-b_n\ge b_t.\]
Moreover, since $k\in \mathopen[b_n,b_{n+1}\mathclose[$ and $b_{t+1}\ge b_t+\delta_t$, we have
$b_{t+1}-m+k-b_n\ge 0$.
In particular, we deduce that $b_{t+1}-m+k-b_n=lN+M$ for some $l\ge 0$.

Let $j:=\#(\mathopen[0,k-b_n\mathclose[\cap\mathopen[0,\delta_n\mathclose[)$ and $z_m=\frac{x_k}{2^{s+r+j}}$. By definition of $s$, we have
$|z_m|=\frac{|x_k|}{2^{s+r+j}}\le \frac{|x_k|}{2^s}<\varepsilon$.
It remains to show that 
\[\|T^{b_{t+1}-m+k-b_n}(z_me_m)-x_ke_k\|<\varepsilon.\]
By definition of $T$, we have $T^{b_{t+1}-m}e_m=2^{b_t+\delta_t-m}(\frac{1}{2^{\tau_t}}e_{b_{\varphi(t)}}-\frac{1}{2^{\delta_t}}e_{b_t})$ and thus
\begin{align*}
T^{b_{t+1}-m}(z_m e_m)&= \frac{x_k}{2^j}e_{b_n}-\frac{2^{\tau_t}}{2^{\delta_t+j}} x_k e_{b_t}.
\end{align*}
Since $k-b_n\le b_{n+1}-b_n<\delta_t$, it follows that
\[T^{b_{t+1}-m+k-b_n}(z_m e_m)=x_k e_{k}-2^{k-b_n}\frac{2^{\tau_t}}{2^{\delta_t+j}}x_k e_{b_t+k-b_n}.\]
We conclude by \eqref{m} that
\begin{align*}
\|T^{b_{t+1}-m+k-b_n}(z_me_m)-x_ke_k\|&\le 2^{k-b_n}\frac{2^{\tau_t}}{2^{\delta_t+j}}|x_k|
\le 2^{-s}|x_k|<\varepsilon.
\end{align*}
\end{proof}

\begin{claim}\label{hyp}
$T$ is hypercyclic.
\end{claim}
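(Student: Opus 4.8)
The plan is to deduce hypercyclicity from topological transitivity via the Birkhoff transitivity theorem, using that finitely supported vectors are dense in $\ell^1$ and are periodic by Claim~\ref{claimper2}. Thus it suffices to show that for any two finitely supported vectors $a,b$ and any $\varepsilon>0$ there exist a vector $u$ with $\|u-a\|<\varepsilon$ and an integer $\nu\ge 0$ with $\|T^\nu u-b\|<\varepsilon$. I would look for $u$ of the form $u=a+v$ with $\|v\|<\varepsilon$, and I would take $\nu$ to be a multiple of a common period $P$ of $a$ and $b$ (which exists by Claim~\ref{claimper2}), so that $T^\nu a=a$. Then $T^\nu u=a+T^\nu v$, and the whole problem reduces to approximating the single finitely supported vector $w:=b-a$ in the form $T^\nu v\approx w$, with $v$ small and $\nu\equiv 0 \pmod P$.

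The heart of the argument is therefore to upgrade the one-coordinate approximation of Claim~\ref{hyp0} to an approximation of an arbitrary finitely supported $w=\sum_{k=0}^{p}w_ke_k$ at a single time $\nu$ lying in a prescribed residue class. First I would treat the coordinates block by block: for the coordinates of $w$ lying in one block $\mathopen[b_n,b_{n+1}\mathclose[$ I would apply the construction of Claim~\ref{hyp0} using a single high block $t_n\in\varphi^{-1}(n)$ (such $t_n$ exist arbitrarily large since $\#\varphi^{-1}(n)=\infty$ by \eqref{varphi}), placing one small mass $z_{m_k}e_{m_k}$ per coordinate at the positions $m_k=b_{t_n+1}+k-b_n-\nu_n$ of block $t_n$. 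Since these positions are distinct, the masses evolve independently and all reach their targets simultaneously at the common block-time $\nu_n$, producing $\sum_{k}w_ke_k$ (restricted to block $n$) together with an error supported at the single high coordinate $b_{t_n}+k-b_n$ of block $t_n$, which can be made arbitrarily small by choosing $t_n$ large in view of \eqref{deltatau1}.

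The delicate point is to read all blocks off at one common time $\nu$. I would exploit the nested period structure: by \eqref{bmultiple} the periods $P_m:=2(b_{m+1}-b_m)$ satisfy $P_{m}\mid P_{m'}$ for $m\le m'$, and both the deposited low part of block $n$ (period $P_n$) and the high error in block $t_n$ (period $P_{t_n}$, by Claim~\ref{claimper}) are $P_{t_n}$-periodic. Hence, if I can arrange $\nu\equiv \nu_n \pmod{P_{t_n}}$ for every $n$, then $T^{\nu}v^{(n)}=T^{\nu-\nu_n}(T^{\nu_n}v^{(n)})$ reproduces exactly the block-$n$ target minus its (still small) error, with no amplification of the error. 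Summing over the blocks then yields $T^\nu v\approx w$. To solve these simultaneous congruences I would choose the high blocks $t_n$ with rapidly increasing indices, so that, by \eqref{deltatau1} and \eqref{b1}, each window of admissible deposit times $\nu_n$ (of width comparable to $\delta_{t_n}$) exceeds the previous period; since the moduli $P_{t_n}$ are nested this lets me fix the residues one block at a time, and the constraint $\nu\equiv 0\pmod P$ is automatically compatible because $P=2(b_q-b_{q-1})$ divides every $P_{t_n}$.

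The main obstacle is precisely this common-time alignment across the geometrically growing block structure: a single high block can feed only one low block directly, and distinct high blocks have incomparable return-time scales, so a naive superposition at a common time fails. The resolution rests on the nested periods \eqref{bmultiple} together with the growing gaps $\delta_t$ from \eqref{deltatau1}, which simultaneously provide enough freedom to solve the congruences and guarantee, by reading off at exact multiples of the relevant periods, that the high-coordinate error terms return to themselves rather than being blown up by the norm of $T$.
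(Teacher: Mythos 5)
Your proposal is correct, and its skeleton is the paper's: reduce via Birkhoff transitivity and Claim~\ref{claimper2} to approximating the single finitely supported vector $w=b-a$ by $T^{\nu}v$ with $v$ small and $\nu$ in a prescribed residue class, then deposit small masses in high blocks $t\in\varphi^{-1}(n)$ exactly as in Claim~\ref{hyp0}. Where you genuinely diverge is in how the common time $\nu$ is produced. The paper proceeds coordinate by coordinate and never solves a congruence system explicitly: it applies Claim~\ref{hyp0} to the $k$-th coordinate with modulus $N_k$ a common multiple of $N_0$ and of the periods of the previously placed masses $e_{m_0},\dots,e_{m_{k-1}}$, and with residue $M=\sum_{j<k}l_jN_j$ equal to the time already accumulated, so the final time is $n=\sum_k l_kN_k$ and the whole alignment is absorbed into the $(N,M)$-parameters of Claim~\ref{hyp0} (whose proof, in the choice of $0\le r<N$ under condition \eqref{m}, contains precisely your window-width argument). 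You instead group the coordinates of $w$ by blocks, serve each low block $n$ with a single high block $t_n$ at one common deposit time $\nu_n$ (which works, by linearity, since the masses sit at distinct positions $m_k=b_{t_n+1}+k-b_n-\nu_n$ and each arrives at its own target at time $\nu_n$), and then align the finitely many times by an explicit system of congruences with nested moduli $P_{t_n}=2(b_{t_n+1}-b_{t_n})$, using \eqref{bmultiple} for nestedness and \eqref{varphi}, \eqref{deltatau1} to make each successive window longer than the preceding modulus. Both routes rest on the same structural facts, and yours is marginally more economical (one high block per low block instead of one per coordinate) at the price of checking consistency of the congruence system; your one-residue-at-a-time scheme does this correctly, since $P\mid P_{t_1}\mid P_{t_2}\mid\cdots$ once the $t_n$ increase, though note that $\nu\equiv 0 \pmod P$ is not quite ``automatic'': it must be imposed on the first choice, $\nu_1\equiv 0\pmod P$, which the window width permits and which then propagates along the chain. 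One further small imprecision: the window of admissible $\nu_n$ has width of order $\delta_{t_n}-\tau_{t_n}-(b_{n+1}-b_n)$ rather than $\delta_{t_n}$, since the deposit position must satisfy $m_k\le b_{t_n}+\delta_{t_n}-\tau_{t_n}-s$ to keep $z_{m_k}$ small; this is exactly what \eqref{deltatau1} supplies, so the argument goes through.
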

\begin{proof}
We recall that $T$ is hypercyclic if and only if $T$ is topologically transitive, i.e. for any non-empty open sets $U$, $V$, there exists $n\ge 0$ such that $T^n(U)\cap V\ne \emptyset$.

Let $x$, $y\in \ell^1$ be finite sequences and $\varepsilon>0$.
It thus suffices to  show that there exist $z\in \ell^1$ and $n\ge 0$ such that $\|z\|<\varepsilon$ and $\|T^n(y+z)-x\|<\varepsilon$. Moreover, since $y$ is a finite sequence, we know by Claim~\ref{claimper2} that there exists $N_0\ge 1$ such that $T^{N_0}y=y$. We therefore deduce that it suffices to find a sequence $z\in \ell^1$ and a multiple $n$ of $N_0$ such that $\|z\|<\varepsilon$ and $\|T^nz-(x-y)\|<\varepsilon$.

Let $\tilde{z}:=x-y=\sum_{k=0}^d \tilde{z}_ke_k$. Using Claim~\ref{hyp0} for $\frac{\varepsilon}{d+1}$, $k=0$, $N=N_0$, $M=0$ and $x_k=\tilde{z}_0$, we obtain
an integer $m_0\ge 0$, $z_{m_0}\in \mathbb{K}$ and $l_0\ge 0$ such that
\[|z_{m_0}|<\frac{\varepsilon}{d+1}\quad \text{and}\quad \|T^{l_0N_0}(z_{m_0}e_{m_0})-\tilde{z}_{0}e_0\|<\frac{\varepsilon}{d+1}.\]
We then use Claim~\ref{hyp0} for $\frac{\varepsilon}{d+1}$, $k=1$, $N=N_1$, $M=l_0N_0$ and $x_k=\tilde{z}_1$, where $N_1$ is a multiple of $N_0$, of the period of $e_{m_0}$ such that $N_1>l_0N_0$. We thus obtain
an integer $m_1\ge 0$, $z_{m_1}\in \mathbb{K}$ and $l_1\ge 0$ such that
\[|z_{m_1}|<\frac{\varepsilon}{d+1}\quad \text{and}\quad \|T^{l_1N_1+l_0N_0}(z_{m_1}e_{m_1})-\tilde{z}_{1}e_1\|<\frac{\varepsilon}{d+1}.\]
Since $N_1$ is a multiple of the period of $e_{m_0}$, we also deduce that
\[\|T^{l_1N_1+l_0N_0}(z_{m_0}e_{m_0})-\tilde{z}_{0}e_0\|=\|T^{l_0N_0}(z_{m_0}e_{m_0})-\tilde{z}_{0}e_0\|<\frac{\varepsilon}{d+1}.\] 
By using repeatedly Claim~\ref{hyp0}, we can in fact obtain $z_{m_0},\ldots,z_{m_d}\in \mathbb{K}$, $l_0,\ldots,l_{d}\ge 0$ and $N_0,\ldots,N_d$ such that for any $0\le k\le d$,
\begin{itemize}
\item $\displaystyle{|z_{m_k}|<\frac{\varepsilon}{d+1}}$;
\item $\displaystyle{\|T^{l_kN_k+\cdots+l_0N_0}(z_{m_k}e_{m_k})-\tilde{z}_{k}e_k\|<\frac{\varepsilon}{d+1}}$;
\item $N_k$ is a multiple of $N_0$ and of the periods of $e_{m_0},\ldots,e_{m_{k-1}}$.
\end{itemize}
Let $z=\sum_{k=0}^{d}z_{m_k}e_{m_k}$ and $n=\sum_{k=0}^{d}l_kN_k$. We conclude that $\|z\|<\varepsilon$, $n$ is a multiple of $N_0$ and
\begin{align*}
\|T^nz-(x-y)\|&=\Big\|T^n\Big(\sum_{k=0}^{d}z_{m_k}e_{m_k}\Big)-\sum_{k=0}^d \tilde{z}_ke_k\Big\|\\
&\le \sum_{k=0}^{d}\|T^{\sum_{j=0}^{d}l_jN_j}(z_{m_k}e_{m_k})-\tilde{z}_ke_k\|\\
&= \sum_{k=0}^{d}\|T^{\sum_{j=0}^{k}l_jN_j}(z_{m_k}e_{m_k})-\tilde{z}_ke_k\|<\sum_{k=0}^{d} \frac{\varepsilon}{d+1}=\varepsilon.
\end{align*}
\end{proof}

It directly follows from Claim~\ref{claimper2} and Claim~\ref{hyp} that \textbf{$\mathbf{T}$ is chaotic}.

\subsection{$T$ is not $\U$-frequently hypercyclic}\label{TpasUfhc}

Let $x\in \ell^1$ and $n\ge 0$. We let  $P_n$ be the operator defined on $\ell^1$ by
\[P_nx=\sum_{k=b_n}^{b_{n+1}-1}x_ke_k\]
and we let \[X_{n}:=\sum_{k=b_n}^{b_{n+1}-1}2^{\#\mathopen[k-b_n,\delta_n\mathclose[}x_ke_k.\]
In order to show that $T$ is not $\U$-frequently hypercyclic, we
start by proving three claims concerning the norms of the elements $P_nT^jP_lx$.
We already remark that if $n>l$ then, by definition of $T$, we have for any $j\ge 0$,
$P_nT^jP_lx=0$.
In particular, we have for every $n\ge 0$, every $j\ge 0$,
\[P_nT^jx=\sum_{l\ge n}P_nT^jP_lx.\]

\begin{claim}\label{fhc0}
Let $x\in \ell^1$ and $0\le n<l$. Then
\[\sup_{j\ge 0}\|P_nT^jP_lx\|\le \frac{1}{2^{2(l+1)}}\|X_{l}\|.\]
\end{claim}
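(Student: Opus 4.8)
The plan is to track how the operator $T$ moves mass from block $l$ down to block $n$. Since $n<l$, any vector in the range of $P_l$ must pass through the coupling term $f_l=\frac{1}{2^{\tau_l}}e_{b_{\varphi(l)}}-\frac{1}{2^{\delta_l}}e_{b_l}$ (applied at $k=b_{l+1}-1$) before it can reach block $n$, and the only part of $f_l$ with index below $b_l$ is the term $\frac{1}{2^{\tau_l}}e_{b_{\varphi(l)}}$. So the first step is to understand the action of $T$ on a single basis vector $e_k$ with $k\in[b_l,b_{l+1})$: applying powers of $T$ doubles the coefficient as long as the index stays in $[b_l,b_l+\delta_l)$, giving a growth factor, and then at $k=b_{l+1}-1$ the coupling term produces the factor $\frac{1}{2^{\tau_l}}$ landing in block $\varphi(l)<l$. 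The key quantity $X_l$ is precisely designed to absorb the doubling: the coefficient of $e_k$ in $X_l$ is $2^{\#[k-b_l,\delta_l[}x_k$, which is the coefficient of $x_k$ after it has been pushed forward to the position where the doubling stops.

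First I would fix $j\ge 0$ and decompose $P_lx=\sum_{k=b_l}^{b_{l+1}-1}x_ke_k$, then estimate $\|P_nT^jP_l(x_ke_k)\|$ for each individual $k$. The point is that after the vector reaches the boundary $b_{l+1}-1$, its coefficient has been multiplied up to at most $2^{\#[k-b_l,\delta_l[}|x_k|$ (the doubling saturates once the index exceeds $b_l+\delta_l$), and the subsequent coupling inserts a factor $\frac{1}{2^{\tau_l}}$ as the orbit drops to block $\varphi(l)$. Thus the total mass arriving in any lower block is bounded by $\frac{1}{2^{\tau_l}}2^{\#[k-b_l,\delta_l[}|x_k|$ per basis vector, which sums to $\frac{1}{2^{\tau_l}}\|X_l\|$. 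Since this bound is uniform in $j$, taking the supremum over $j$ preserves it. The remaining task is to convert the factor $\frac{1}{2^{\tau_l}}$ into $\frac{1}{2^{2(l+1)}}$, which follows directly from assumption \eqref{deltatau2}: $\tau_l\ge \delta_{l-1}+2(l+1)\ge 2(l+1)$.

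I expect the main obstacle to be bookkeeping the factors picked up along the orbit, in particular making sure the doubling count matches the definition of $X_l$ and that nothing extra is gained once the coupling term sends mass below block $l$. Concretely, once a basis vector lands in block $\varphi(l)$ after the coupling, one must check that any further application of $T$ cannot increase the norm beyond the claimed bound — but since $\|Te_k\|\le 2$ only when doubling occurs inside a block and $\|Te_k\|\le 1$ otherwise, and more importantly since every further coupling only introduces additional factors $\frac{1}{2^{\tau}}$ without new net growth relative to the already-accounted doubling, the contribution to any fixed block $n$ stays controlled. A clean way to organize this is to argue that the norm of $P_nT^jP_l(x_ke_k)$ is at most the absolute value of a single coefficient reaching block $n$, and that coefficient is at most $\frac{1}{2^{\tau_l}}2^{\#[k-b_l,\delta_l[}|x_k|$ because every descent through a coupling term strictly decreases the block index while contributing a factor at most $\frac{1}{2^{\tau}}\le 1$. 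Summing over $k$ and invoking \eqref{deltatau2} then yields the stated inequality.
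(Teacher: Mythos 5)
Your overall roadmap (mass leaves block $l$ only through the coupling term, picking up the factor $2^{\#\mathopen[k-b_l,\delta_l\mathclose[}$ from the doubling zone and $2^{-\tau_l}$ from the coupling, with $X_l$ designed exactly to absorb the in-block doubling) matches the paper's, but the step where you dismiss what happens \emph{after} the descent contains a genuine gap. You assert that once the orbit lands at $e_{b_{\varphi(l)}}$, every further descent "contributes a factor at most $\frac{1}{2^{\tau}}\le 1$ without new net growth," so that the contribution to block $n$ is at most $\frac{2^{\#\mathopen[k-b_l,\delta_l\mathclose[}}{2^{\tau_l}}|x_k|$ per basis vector. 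This is false: inside block $\varphi(l)$ the orbit of $e_{b_{\varphi(l)}}$ is doubled $\delta_{\varphi(l)}$ times before reaching the next coupling, and the same amplification recurs in every intermediate block of the chain $l\to\varphi(l)\to\varphi^2(l)\to\cdots$. Concretely, take $x=e_{b_l+\delta_l}$ (so $\|X_l\|=1$) and $n=\varphi(l)$: then $T^{b_{l+1}-b_l-\delta_l}x=\frac{1}{2^{\tau_l}}e_{b_n}-\frac{1}{2^{\delta_l}}e_{b_l}$, and $\delta_n$ further iterations give $\|P_nT^{b_{l+1}-b_l-\delta_l+\delta_n}x\|=2^{\delta_n-\tau_l}$, exceeding your per-vector bound $2^{-\tau_l}$ by the factor $2^{\delta_n}$, which is huge for the paper's parameters ($\delta_n=2\cdot 4^{n+1}$). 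Correspondingly, your final reduction uses only the weakened form $\tau_l\ge 2(l+1)$ of \eqref{deltatau2}; but the whole point of the hypothesis $\tau_l\ge\delta_{l-1}+2(l+1)$ is that the $\delta_{l-1}$ term is there precisely to absorb the amplification $2^{\delta_{\varphi(l)}}\le 2^{\delta_{l-1}}$ that your bookkeeping drops. The claim is still true in this example, but only because of the full strength of \eqref{deltatau2}, which your argument never invokes.

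The paper closes this gap by an induction along the chain of couplings. After observing that $P_nT^jP_lx=0$ unless $n=\varphi^M(l)$ for some $M\ge 1$, and using the periodicity $T^{2(b_{l+1}-b_l)}e_k=e_k$ of Claim~\ref{claimper} to reduce to one period, it establishes the recursive inequality \eqref{eqpnl},
\[\sup_{j\ge 0}\|P_nT^j e_k\|\le \frac{2^{\#\mathopen[k-b_l,\delta_l\mathclose[}}{2^{\tau_l}}\sup_{j\ge 0}\|P_nT^j e_{b_{\varphi(l)}}\|,\]
iterates it down the chain to $\varphi^{N-1}(l)$ where $N=\min\{M\ge 1: n=\varphi^M(l)\}$, and evaluates the last stage as $\sup_j\|P_nT^je_{b_n}\|=2^{\delta_n}$ --- exactly the in-block doubling your argument neglects. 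The resulting product is then regrouped as $\frac{2^{\delta_{\varphi(l)}}}{2^{\tau_l}}\prod_{s=1}^{N-1}\frac{2^{\delta_{\varphi^{s+1}(l)}}}{2^{\tau_{\varphi^s(l)}}}$, so that each in-block growth $2^{\delta_{\varphi(m)}}$ is paired against the \emph{preceding} coupling factor $2^{-\tau_m}$, each such quotient being at most $2^{-2(m+1)}\le 1$ by \eqref{deltatau2}; the first factor alone then yields the stated $2^{-2(l+1)}$. To repair your proof you need exactly this pairing; treating descents as norm-nonincreasing cannot work.
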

\begin{proof}
We first remark that if $n\ne \varphi^N(l)$ for every $N\ge 1$, then for any $j\ge 0$, we have by definition of $T$
\[P_nT^jP_lx=0.\]
Suppose that $n=\varphi^M(l)$ for some $M\ge 1$. 
We can then prove that for any $k\in \mathopen[b_l,b_{l+1}\mathclose[$, we have
\begin{eqnarray}\label{eqpnl}
\sup_{j\ge 0}\|P_nT^j e_k\|\le \frac{2^{\#\mathopen[k-b_l,\delta_l\mathclose[}}{2^{\tau_l}}\sup_{j\ge 0}\|P_nT^j e_{b_{\varphi(l)}}\|.
\end{eqnarray}
Indeed, if $j<b_{l+1}-k$ then $P_nT^j e_k=0$. If $j\in\mathopen[b_{l+1}-k,b_{l+1}-k+b_{l+1}-b_l\mathclose[$ then
\[\|P_nT^j e_k\|=\Big\|P_nT^{j-(b_{l+1}-k)}\Big(\frac{2^{\#\mathopen[k-b_l,\delta_l\mathclose[}}{2^{\tau_l}}e_{b_{\varphi(l)}}\Big)\Big\|\le \frac{2^{\#\mathopen[k-b_l,\delta_l\mathclose[}}{2^{\tau_l}}\sup_{i\ge 0}\|P_nT^i e_{b_{\varphi(l)}}\|\]
and if $j\in \mathopen[b_{l+1}-k+b_{l+1}-b_l, 2(b_{l+1}-b_l)\mathclose[$, we have $P_nT^j e_k=0$ since \[T^{b_{l+1}-k+b_{l+1}-b_l} e_k=\frac{2^{\#\mathopen[k-b_l,\delta_l\mathclose[}}{2^{\delta_l}}e_{b_l}.\]
We conclude that \eqref{eqpnl} is satisfied because $T^{2(b_{l+1}-b_l)}e_k=e_k$ (Claim~\ref{claimper}).

Let $N:=\min\{M\ge 1: n=\varphi^M(l)\}$. We then get thanks to \eqref{varphi}, \eqref{deltatau2} and \eqref{eqpnl} that
\begin{align*}
\sup_{j\ge 0}\|P_nT^jP_lx\|&=\sup_{j\ge 0}\Big\|P_nT^j\Big(\sum_{k=b_l}^{b_{l+1}-1}x_ke_k\Big)\Big\|\\
&\le \sum_{k=b_l}^{b_{l+1}-1}|x_k|\sup_{j\ge 0}\|P_nT^j e_k\|\\
&\le \sum_{k=b_l}^{b_{l+1}-1}|x_k|\frac{2^{\#\mathopen[k-b_l,\delta_l\mathclose[}}{2^{\tau_l}}\sup_{j\ge 0}\|P_nT^j e_{b_{\varphi(l)}}\|\\
&\vdots\\
&\le \sum_{k=b_l}^{b_{l+1}-1}|x_k|\frac{2^{\#\mathopen[k-b_l,\delta_l\mathclose[}}{2^{\tau_l}}\prod_{s=1}^{N-1}\frac{2^{\delta_{\varphi^s(l)}}}{2^{\tau_{\varphi^s(l)}}}\sup_{j\ge 0}\|P_nT^j e_{b_{n}}\|\\
&=\frac{\|X_l\|}{2^{\tau_l}}\prod_{s=1}^{N-1}\frac{2^{\delta_{\varphi^s(l)}}}{2^{\tau_{\varphi^s(l)}}}2^{\delta_n}
=\frac{2^{\delta_{\varphi(l)}}}{2^{\tau_l}}\|X_l\|\prod_{s=1}^{N-1}\frac{2^{\delta_{\varphi^{s+1}(l)}}}{2^{\tau_{\varphi^s(l)}}}\\
&\le \frac{1}{2^{2(l+1)}}\|X_l\|.
\end{align*}
\end{proof}

\begin{claim}\label{fhc1}
Let $x\in \ell^1$ and $0\le n<l$. For every $j\in \mathopen[0,b_{l+1}-b_l-\delta_l\mathclose]$,
\[\|P_nT^jP_lx\|\le \frac{1}{2^{2(l+1)}}\|P_lx\|.\]
\end{claim}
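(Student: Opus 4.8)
The plan is to obtain this sharper bound by re-running the computation of Claim~\ref{fhc0} while exploiting the restriction $j\le b_{l+1}-b_l-\delta_l$ to suppress the amplification factor that separates $\|P_lx\|$ from the larger quantity $\|X_l\|$. The whole point is that, in the allotted range of $j$, the basis vectors at the beginning of the $l$-th block — precisely the ones carrying a big weight in $X_l$ — simply cannot reach a lower block at all.

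First I would isolate the following vanishing statement: for $b_l\le k<b_l+\delta_l$ and $0\le j\le b_{l+1}-b_l-\delta_l$ one has $P_nT^je_k=0$. Indeed, by the definition of $T$ the iterate $T^je_k$ is a scalar multiple of $e_{k+j}$, and hence supported in the block $\mathopen[b_l,b_{l+1}\mathclose[$, as long as $j<b_{l+1}-k$; but for such $k$ we have $b_{l+1}-k>b_{l+1}-b_l-\delta_l\ge j$, so the wrap-around that sends mass into $e_{b_{\varphi(l)}}$ and $e_{b_l}$ has not yet occurred. Since $n<l$, applying $P_n$ to a vector supported in block $l$ gives $0$. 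Consequently, in
\[P_nT^jP_lx=\sum_{k=b_l}^{b_{l+1}-1}x_k\,P_nT^je_k\]
only the indices $b_l+\delta_l\le k\le b_{l+1}-1$ survive, and for each of them $\#\mathopen[k-b_l,\delta_l\mathclose[=0$.

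Then I would repeat the chain of inequalities of Claim~\ref{fhc0} verbatim, but with the sum restricted to these surviving indices. The per-vector estimate produced in that proof, with $N:=\min\{M\ge 1:n=\varphi^M(l)\}$, reads
\[\sup_{i\ge 0}\|P_nT^ie_k\|\le \frac{2^{\#\mathopen[k-b_l,\delta_l\mathclose[}}{2^{\tau_l}}\prod_{s=1}^{N-1}\frac{2^{\delta_{\varphi^s(l)}}}{2^{\tau_{\varphi^s(l)}}}\,2^{\delta_n}\le \frac{2^{\#\mathopen[k-b_l,\delta_l\mathclose[}}{2^{2(l+1)}},\]
the last inequality coming from \eqref{varphi} and \eqref{deltatau2} exactly as before. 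On the surviving range this is $\frac{1}{2^{2(l+1)}}$ since there $2^{\#\mathopen[k-b_l,\delta_l\mathclose[}=1$, and summing yields
\[\|P_nT^jP_lx\|\le \sum_{k=b_l+\delta_l}^{b_{l+1}-1}|x_k|\,\frac{1}{2^{2(l+1)}}\le \frac{1}{2^{2(l+1)}}\|P_lx\|.\]

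The crucial — and essentially only new — point is the vanishing statement: the restriction on $j$ prevents the ``doubling'' vectors $e_k$ with $k<b_l+\delta_l$ from ever reaching a lower block, and these are exactly the vectors carrying the large weight $2^{\#\mathopen[k-b_l,\delta_l\mathclose[}$ (as large as $2^{\delta_l}$). Eliminating them collapses $\|X_l\|$ down to $\|P_lx\|$, after which the argument is a direct repetition of Claim~\ref{fhc0}. I expect the only delicate bookkeeping to be pinning down the exact threshold $b_l+\delta_l$, so that the boundary index $k=b_l+\delta_l$, whose wrap-around occurs precisely at $j=b_{l+1}-b_l-\delta_l$, is correctly retained in the surviving sum.
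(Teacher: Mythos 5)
Your proof is correct and takes essentially the same approach as the paper: the paper's proof likewise observes that for $j\in\mathopen[0,b_{l+1}-b_l-\delta_l\mathclose]$ the coordinates of $P_lx$ on $\mathopen[b_l,b_l+\delta_l\mathclose[$ contribute nothing to $P_nT^jP_lx$, replaces $P_lx$ by its truncation $\tilde{x}$ supported on $\mathopen[b_l+\delta_l,b_{l+1}\mathclose[$, notes $\|\tilde{X}_l\|\le\|P_lx\|$, and applies Claim~\ref{fhc0}. The only cosmetic difference is that you re-run the chain of estimates from Claim~\ref{fhc0} inline (using that $2^{\#\mathopen[k-b_l,\delta_l\mathclose[}=1$ on the surviving indices) rather than invoking that claim as a black box on $\tilde{x}$.
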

\begin{proof}
Since $n<l$, we remark that for every $j\in \mathopen[0,b_{l+1}-b_l-\delta_l\mathclose]$, we have
\[\|P_nT^jP_lx\|=\Big\|P_nT^j\Big(\sum_{k=b_l}^{b_{l+1}-1}x_ke_k\Big)\Big\|=\Big\|P_nT^j\Big(\sum_{k=b_l+\delta_l}^{b_{l+1}-1}x_ke_k\Big)\Big\|.\]
Let $\tilde{x}=\sum_{k=b_l+\delta_l}^{b_{l+1}-1}x_ke_k$. We then have
$\| \tilde{X}_l\|\le\|P_lx\|$
and we deduce from Claim \ref{fhc0} that for every $j\in \mathopen[0,b_{l+1}-b_l-\delta_l\mathclose]$,
\[\|P_nT^jP_lx\|=\|P_nT^jP_l\tilde{x}\|\le \frac{1}{2^{2(l+1)}}\|\tilde{X}_l\|\le \frac{1}{2^{2(l+1)}}\|P_lx\|.\]
\end{proof}

\begin{claim}\label{fhc2}
Let $x\in \ell^1$ and $l\ge 0$. Then for every $k\ge 0$, 
\[\frac{\#\{j\le k: \|P_lT^jP_lx\|\ge \frac{\|X_l\|}{2}\}}{k+1}\ge 1 -\frac{2\delta_l}{k+1}-\frac{2\delta_l}{b_{l+1}-b_l}.\]
\end{claim}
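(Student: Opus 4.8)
The plan is to exploit the fact that, once restricted to the block $\mathopen[b_l,b_{l+1}\mathclose[$ and projected back by $P_l$, the operator $T$ acts as a \emph{weighted cyclic shift}. First I would observe that, by the definition of $T$, the only component leaving block $l$ is the term $e_{b_{\varphi(l)}}$ produced at the wrap $k=b_{l+1}-1$; it lands in a block of index $\varphi(l)<l$, and since $\varphi(m)<m$ for every $m\ge 1$, all further escaping mass only visits blocks of strictly smaller index and never returns to block $l$. Consequently, for each $k\in\mathopen[b_l,b_{l+1}\mathclose[$ the vector $P_lT^je_k$ is a single scalar multiple of the single basis vector $e_{q_j(k)}$ with $q_j(k):=b_l+((k-b_l+j)\bmod L)$ and $L:=b_{l+1}-b_l$. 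Writing $g_j(k):=\|P_lT^je_k\|$ and noting that $k\mapsto q_j(k)$ is, for fixed $j$, a bijection of $\mathopen[b_l,b_{l+1}\mathclose[$, there is no cancellation in the $\ell^1$ norm and
\[\|P_lT^jP_lx\|=\sum_{k=b_l}^{b_{l+1}-1}|x_k|\,g_j(k),\]
where by Claim~\ref{claimper} the weights $g_j(k)$ are periodic in $j$ with period $L$.

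The key step is a conservation law. Following the particle issued from $k$ to its position $q_j(k)$ at time $j$, the doublings it undergoes and the factor $\tfrac{1}{2^{\delta_l}}$ it picks up at each wrap combine so that
\[2^{\#\mathopen[q_j(k)-b_l,\delta_l\mathclose[}\,g_j(k)=2^{\#\mathopen[k-b_l,\delta_l\mathclose[}.\]
In other words the ``fully doubled mass'' $2^{\#\mathopen[k-b_l,\delta_l\mathclose[}|x_k|$ of each coordinate is invariant along the orbit, so that the $X$-transform of $P_lT^jP_lx$ always has norm $\|X_l\|$. Since $2^{\#\mathopen[q-b_l,\delta_l\mathclose[}=1$ exactly when $q\ge b_l+\delta_l$ and is $>1$ otherwise, discarding from this invariant the coordinates sitting in the doubling zone $\mathopen[b_l,b_l+\delta_l\mathclose[$ yields
\[\|P_lT^jP_lx\|\ \ge\ \|X_l\|-D_j,\qquad D_j:=\sum_{k\,:\,q_j(k)\in\mathopen[b_l,b_l+\delta_l\mathclose[}2^{\#\mathopen[k-b_l,\delta_l\mathclose[}|x_k|.\]

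Finally I would control $D_j$ on average. Each particle visits every position of the block exactly once per period, hence lies in the doubling zone for exactly $\delta_l$ of the $L$ instants of a period; as its contribution to $D_j$ is the \emph{constant} $2^{\#\mathopen[k-b_l,\delta_l\mathclose[}|x_k|$, summing over a period gives $\sum_{j=0}^{L-1}D_j=\delta_l\|X_l\|$. Assuming $\|X_l\|>0$ (the case $\|X_l\|=0$ being trivial, as then $P_lx=0$), Markov's inequality shows that at most $2\delta_l$ instants $j$ per period satisfy $D_j\ge\frac{\|X_l\|}{2}$; for every other $j$ the displayed lower bound gives $\|P_lT^jP_lx\|\ge\frac{\|X_l\|}{2}$. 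Since $\mathopen[0,k\mathclose]$ meets exactly $\lceil (k+1)/L\rceil\le \frac{k+1}{L}+1$ periods, the number of ``bad'' $j\le k$ is at most $2\delta_l\big(\frac{k+1}{L}+1\big)$; subtracting from $k+1$ and dividing by $k+1$ gives the announced bound.

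The hard part will be the conservation law together with the no-return property of the escaping mass: it is precisely this that simultaneously guarantees the absence of cancellation in the $\ell^1$ norm, the invariance of $\|X_l\|$, and the clean averaging identity $\sum_{j=0}^{L-1}D_j=\delta_l\|X_l\|$ that produces the exact constants in the statement.
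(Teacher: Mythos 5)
Your proof is correct, and its core coincides with the paper's: your set $\{k:\ q_j(k)\in\mathopen[b_l,b_l+\delta_l\mathclose[\}$ of particles currently sitting in the doubling zone is exactly the paper's sliding window $I_{i_j}$, and your inequality $\|P_lT^jP_lx\|\ge\|X_l\|-D_j$ is precisely the estimate the paper introduces with ``a detailed analysis then shows''. In fact your conservation law $2^{\#\mathopen[q_j(k)-b_l,\delta_l\mathclose[}\,g_j(k)=2^{\#\mathopen[k-b_l,\delta_l\mathclose[}$, combined with the observation that mass escaping at the wrap goes to blocks of strictly smaller index and never returns (so that $P_lT^jP_l$ is a weighted cyclic permutation with no $\ell^1$-cancellation), is a clean way of supplying the detail the paper leaves implicit. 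Where you genuinely diverge is the endgame. The paper argues by dichotomy: either every window $I_i$ carries less than $\|X_l\|/2$ of the mass of $X_l$, in which case every $j$ is good, or some fixed window $I_{i'}$ carries at least half, in which case $j$ is good whenever $I_{i_j}\cap I_{i'}=\emptyset$, and the sliding window meets the fixed one at most $2\delta_l$ times per period of length $L=b_{l+1}-b_l$. You instead average: each particle spends exactly $\delta_l$ of the $L$ instants of a period in the doubling zone, contributing the constant $2^{\#\mathopen[k-b_l,\delta_l\mathclose[}|x_k|$, so $\sum_{j=0}^{L-1}D_j=\delta_l\|X_l\|$, and Markov's inequality yields at most $2\delta_l$ bad instants per period. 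Both routes produce identical constants; yours avoids the case split and would survive even if the preimage of the doubling zone were not an interval, while the paper's dichotomy avoids computing any average and uses only that the windows are short intervals. Two cosmetic points: the $L$-periodicity of the magnitudes $g_j(k)$ is better deduced from your own conservation law than from Claim~\ref{claimper}, which directly gives only period $2L$ (the coefficient returns with a sign change after $L$ steps); and for $j$ with $D_j<\|X_l\|/2$ you in fact get the strict inequality $\|P_lT^jP_lx\|>\|X_l\|/2$, which gives the required weak one a fortiori.
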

\begin{proof}
Let $x\in \ell^1$ and $l\ge 0$.
For any $j\ge 0$, we denote ${i_j:=j\mod (b_{l+1}-b_l)}$ and we denote by $X_{l,j}$ the coordinates of $X_{l}$. A detailed analysis then shows that
\[\|P_lT^jP_lx\|\ge \sum_{m\in \mathopen[b_l,b_{l+1}\mathclose[\backslash I_{i_j}}|X_{l,m}|= \|X_l\|-\sum_{m\in I_{i_j}}|X_{l,m}|,\]
where for any $0\le i< b_{l+1}-b_l$
\[I_i:=\left|
\begin{array}{cl}
\mathopen[b_{l+1}-i,b_{l+1}-i+\delta_l\mathclose[& \quad\text{if}\ i\ge \delta_l \\
 {\mathopen[b_{l+1}-i,b_{l+1}\mathclose[\cup \mathopen[b_l,b_l+\delta_l-i\mathclose[}& \quad\text{if}\ i<\delta_l.
    \end{array}\right.\]
    
Therefore, if we have $\sum_{m\in I_i}|X_{l,m}|<\frac{\|X_l\|}{2}$ for every $0\le i< b_{l+1}-b_l$, we deduce that for every $j\ge 0$
\[\|P_lT^jP_lx\|\ge \|X_l\|-\sum_{m\in I_{i_j}}|X_{l,m}|\ge \|X_l\|-\frac{\|X_l\|}{2}=\frac{\|X_l\|}{2}\]
and thus 
\[\frac{\#\{j\le k: \|P_lT^jP_lx\|\ge \frac{\|X_l\|}{2}\}}{k+1}=1.\]

On the other hand, if there exists $0\le i'< b_{l+1}-b_l$ such that $\sum_{m\in I_{i'}}|X_{l,m}|\ge \frac{\|X_l\|}{2}$ then for every $j\ge 0$ satisfying $I_{i_j}\cap I_{i'}=\emptyset$, we have
\[\|P_lT^jP_lx\|\ge \sum_{m\in \mathopen[b_l,b_{l+1}\mathclose[\backslash I_{i_j}}|X_{l,m}|\ge \sum_{m\in I_{i'}}|X_{l,m}|\ge \frac{\|X_l\|}{2}.\]
In view of the definition of sets $I_i$, we remark that if $(b_{l+1}-b_l)s\ge k$ then \[\#\{j\le k: I_{i_j}\cap I_{i'}\ne \emptyset\}\le 2\delta_l\lceil s\rceil.\] We conclude that for every $k\ge0$
\begin{align*}
\frac{\#\{j\le k: \|P_lT^jP_lx\|\ge \frac{\|X_l\|}{2}\}}{k+1}&\ge \frac{\#\{j\le k: I_{i_j}\cap I_{i'}=\emptyset\}}{k+1}\\
&\ge\frac{(k+1)-2\delta_l\big(1+\frac{k}{b_{l+1}-b_l}\big)}{k+1}\\
&\ge 1-\frac{2\delta_l}{k+1}-\frac{2\delta_l}{b_{l+1}-b_l}.
\end{align*}
\end{proof}

Thanks to Claims~\ref{fhc0}, \ref{fhc1} and \ref{fhc2}, we can show the following result which will directly imply that $T$ is not $\mathcal{U}$-frequently hypercyclic. This claim will also be used in order to prove that $T$ is not distributionally chaotic.

\begin{claim}\label{prelim}
Let $x\in \ell^1\backslash\{0\}$. If for every $n\ge 0$ with $\|X_n\|>0$, there exists $j\ge 0$ such that
\[\sum_{l>n}\|P_nT^jP_lx\|>\frac{\|X_n\|}{4},\]
then there exists $l_0\ge 0$ such that $\|X_{l_0}\|>0$ and \[\underline{\text{\emph{dens}}}\left\{j\ge 0: \|T^jx\|\ge \frac{\|X_{l_0}\|}{4}\right\}=1.\]
\end{claim}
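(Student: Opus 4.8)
The plan is to bound $\|T^jx\|$ from below, at each individual time, by the self-interaction of one cleverly chosen block, and then to show that the times where this fails form a set of density $0$. Write $a_l:=\|X_l\|$, $\beta_l:=b_{l+1}-b_l$ and $S:=\{l:a_l>0\}$. First I would extract the only structural consequence of the hypothesis that is really needed. For $n\in S$, the hypothesis together with Claim~\ref{fhc0} gives
\[\frac{a_n}{4}<\sum_{l>n}\|P_nT^jP_lx\|\le\sum_{l>n}\frac{a_l}{2^{2(l+1)}},\qquad\text{i.e.}\qquad a_n<\sum_{l>n}\frac{a_l}{4^l}.\]
A one-line estimate forces $\sup_l a_l=\infty$: if $a_l\le M<\infty$ for all $l$, choosing $n\in S$ with $a_n>M/2$ would give $\tfrac M2<a_n<M\sum_{l>n}4^{-l}\le\tfrac M3$, which is absurd. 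Now fix any $l_0\in S$, set $c:=a_{l_0}/4$, and observe that $\mathcal L:=\{l\ge l_0:a_l\ge a_{l_0}\}$ is infinite.

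Given a horizon $k$, I would take $l^\ast=l^\ast(k)$ to be the largest element of $\mathcal L$ with $\beta_{l^\ast}\le 2k$; since $\mathcal L$ is infinite and $\beta_l\to\infty$ (by \eqref{bmultiple}), this exists for $k$ large and $l^\ast(k)\to\infty$. The heart of the proof is that the contamination of block $l^\ast$ by the higher blocks stays below $c$ on the whole of $[0,k]$. Split the blocks $l>l^\ast$ according to membership in $\mathcal L$. For $l\notin\mathcal L$ we have $a_l<a_{l_0}=4c$, so Claim~\ref{fhc0} gives, uniformly in $j$,
\[\sum_{\substack{l>l^\ast\\ l\notin\mathcal L}}\|P_{l^\ast}T^jP_lx\|\le\sum_{l>l^\ast}\frac{4c}{2^{2(l+1)}}\le\frac c3.\]
For $l\in\mathcal L$ with $l>l^\ast$ we have $\beta_l>2k$ by maximality of $l^\ast$, hence $j\le k<\beta_l/2<\beta_l-\delta_l$ for every $j\le k$ (using \eqref{b1}), so Claim~\ref{fhc1} applies and
\[\sum_{\substack{l>l^\ast\\ l\in\mathcal L}}\|P_{l^\ast}T^jP_lx\|\le\sum_{l>l^\ast}\frac{\|P_lx\|}{2^{2(l+1)}}\le\sum_{l>l^\ast}\|P_lx\|,\]
which is a tail of $\|x\|_1$ and is therefore $<c/3$ once $l^\ast$ (equivalently $k$) is large. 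Thus for all large $k$ and all $j\le k$ the total leakage into block $l^\ast$ is $<c$.

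With this in hand, the reverse triangle inequality together with $P_{l^\ast}T^jP_lx=0$ for $l<l^\ast$ yields
\[\|T^jx\|\ge\|P_{l^\ast}T^jx\|\ge\|P_{l^\ast}T^jP_{l^\ast}x\|-c\qquad(j\le k).\]
Hence any $j\le k$ with $\|T^jx\|<c$ must satisfy $\|P_{l^\ast}T^jP_{l^\ast}x\|<2c=\tfrac{a_{l_0}}2\le\tfrac{a_{l^\ast}}2$, i.e. it lies in the exceptional set of Claim~\ref{fhc2} for the block $l^\ast$. That claim bounds the number of such $j\le k$ by $2\delta_{l^\ast}+\tfrac{2\delta_{l^\ast}}{\beta_{l^\ast}}(k+1)$, so, using $\beta_{l^\ast}\le 2k$,
\[\frac{\#\{j\le k:\|T^jx\|<c\}}{k+1}\le\frac{6\delta_{l^\ast}}{\beta_{l^\ast}}\xrightarrow[k\to\infty]{}0\]
by \eqref{b2}, since $l^\ast(k)\to\infty$. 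This is exactly $\underline{\text{dens}}\{j:\|T^jx\|\ge c\}=1$ with the asserted $l_0$ and $c=\|X_{l_0}\|/4$.

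The step I expect to be the main obstacle is precisely the uniform leakage bound over the entire window $[0,k]$: a single fixed block yields only lower density $1-2\delta_l/\beta_l<1$, so the passage to full density relies on letting $l^\ast$ grow with $k$ and on the above dichotomy, which uses Claim~\ref{fhc0} for the small ($a_l<4c$) higher blocks and Claim~\ref{fhc1} for the large ($\beta_l>2k$) ones—the latter encoding the fact that the heavily amplified mass of a far block has not yet been fed back by time $k$. Checking that the window $[0,\beta_l-\delta_l]$ of Claim~\ref{fhc1} really contains $[0,k]$ for every $l\in\mathcal L$ above $l^\ast$, and that both tails are genuinely $<c/3$, is where the quantitative assumptions \eqref{deltatau1}, \eqref{deltatau2}, \eqref{b1} and \eqref{b2} are consumed.
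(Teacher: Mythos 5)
Your proof is correct, but it takes a genuinely different route from the paper's. The paper consumes the hypothesis \emph{repeatedly}, through an inductive construction: it defines $j_{n+1}$ as the minimal time at which the leakage into block $l_n$ exceeds $\tfrac14\|X_{l_n}\|$, extracts by pigeonhole (with geometric weights) a single block $l_{n+1}>l_n$ carrying a large term, deduces $j_{n+1}>b_{l_{n+1}+1}-b_{l_{n+1}}-\delta_{l_{n+1}}$ from Claim~\ref{fhc1} (after choosing $l_0$ so that $\|P_{l_0}x\|\ge 2^{-(l_0+1)}\|x\|$) and $\|X_{l_{n+1}}\|\ge\|X_{l_0}\|$ from Claim~\ref{fhc0}; the uniform leakage bound on $[0,k]$ then comes for free from the \emph{minimality} of $j_{n_k+1}$ for $k\in[j_{n_k},j_{n_k+1})$, and Claim~\ref{fhc2} plus \eqref{b2} finish. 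You instead use the hypothesis only \emph{once}: combined with Claim~\ref{fhc0} it yields $\|X_n\|<\sum_{l>n}4^{-l}\|X_l\|$ for $n\in S$, forcing $\sup_l\|X_l\|=\infty$ and hence $\mathcal{L}$ infinite, after which the hypothesis is never invoked again; the leakage control on all of $[0,k]$ is achieved by your horizon-dependent block choice $l^\ast(k)$ and the small/far dichotomy --- Claim~\ref{fhc0} handles the higher blocks with $\|X_l\|<\|X_{l_0}\|$, while for higher blocks in $\mathcal{L}$ the maximality of $l^\ast$ and \eqref{b1} put $[0,k]$ inside the window $[0,b_{l+1}-b_l-\delta_l]$ of Claim~\ref{fhc1}, and the resulting bound is absorbed by the $\ell^1$ tail $\sum_{l>l^\ast}\|P_lx\|\to 0$. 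I checked the quantitative steps (the $\sup_l\|X_l\|=\infty$ contradiction, the two $c/3$ tail estimates, the containment $j\le k<\beta_l/2\le\beta_l-\delta_l$, the conversion of the Claim~\ref{fhc2} count into the $6\delta_{l^\ast}/\beta_{l^\ast}$ bound via $k\ge\beta_{l^\ast}/2$, and $l^\ast(k)\to\infty$), and they are all valid; both proofs end identically with Claim~\ref{fhc2} and \eqref{b2}. What your version buys is a non-recursive argument and a formally stronger conclusion: the density-one statement holds for \emph{every} $l_0$ with $\|X_{l_0}\|>0$ (indeed your argument shows $\|T^jx\|$ exceeds any threshold along a set of density one), whereas the paper produces one specific $l_0$; the trade-off is that you need the summability of $\sum_l\|P_lx\|$, which the paper's proof replaces by the single pigeonhole choice of $l_0$ and the minimality of the hitting times $j_n$. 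One consequence worth noting: your reliance on the $\ell^1$ tail is harmless here but is the one ingredient that would need re-examination in the $c_0$ adaptation sketched in the paper's concluding section, where the paper's minimality argument transfers more directly.
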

\begin{proof}
Let $x\in \ell^1\backslash\{0\}$. We consider $l_0\ge 0$ such that 
\begin{equation}\label{eq:l0}
\|P_{l_0}x\|\ge \frac{1}{2^{l_0+1}}\|x\|.
\end{equation}
We remark that for every $j\ge 0$, every $n\ge 0$, we have
\begin{equation}\label{useful}
\|T^jx\|\ge \|P_{n}T^jx\|\ge \|P_{n}T^jP_{n}x\|-\sum_{l>n}\|P_{n}T^jP_lx\|.
\end{equation}
Let $j_1:=\min\{j\ge 0:\sum_{l>l_0}\|P_{l_0}T^jP_lx\|> \frac{1}{4}\|X_{l_0}\|\}$, which is well-defined since the set $\{j\ge 0:\sum_{l>l_0}\|P_{l_0}T^jP_lx\|> \frac{1}{4}\|X_{l_0}\|\}$ is non-empty by assumption. We deduce from the definition of $j_1$ that there exists $l_1>l_0$ such that 
\[\|P_{l_0}T^{j_1}P_{l_1}x\|>\frac{2^{l_0}}{2^{l_1+2}}\|X_{l_0}\|\]
 and we deduce that $j_1> b_{l_1+1}-b_{l_1}-\delta_{l_1}$ since, by using Claim~\ref{fhc1} and \eqref{eq:l0}, we have for every $j\in \mathopen[0,b_{l_1+1}-b_{l_1}-\delta_{l_1}\mathclose]$,
\[\|P_{l_0}T^{j}P_{l_1}x\|\le \frac{1}{2^{2(l_1+1)}}\|P_{l_1}x\|\le \frac{1}{2^{2(l_1+1)}}\|x\|
\le \frac{2^{l_0+1}}{2^{2(l_1+1)}}\|P_{l_0}x\|\le \frac{2^{l_0}}{2^{l_1+2}}\|X_{l_0}\|
.\]
On the other hand, by using Claim~\ref{fhc0}, we get
\[\|X_{l_1}\|\ge 2^{2(l_1+1)}\|P_{l_0}T^{j_1}P_{l_1}x\|\ge 2^{l_1+l_0}\|X_{l_0}\|\ge \|X_{l_0}\|.\]

If we now let $j_2:=\min\{j\ge 0:\sum_{l>l_1}\|P_{l_1}T^jP_lx\|> \frac{1}{4}\|X_{l_1}\|\}$, there exists  $l_2>l_1$ such that 
\[\|P_{l_1}T^{j_2}P_{l_2}x\|>\frac{2^{l_1}}{2^{l_2+2}}\|X_{l_1}\|\]
and we deduce as previously that $j_2> b_{l_2+1}-b_{l_2}-\delta_{l_2}$ and $\|X_{l_2}\|\ge \|X_{l_0}\|$.
More generally, by repeating these arguments, we obtain an increasing sequence of integers $(l_n)_{n\ge 0}$ and a sequence of integers $(j_n)_{n\ge 1}$ with $j_n=\min\{j\ge 0:\sum_{l>l_{n-1}}\|P_{l_{n-1}}T^jP_lx\|> \frac{1}{4}\|X_{l_{n-1}}\|\}$ such that for every $n\ge 1$, 
\[j_n> b_{l_n+1}-b_{l_n}-\delta_{l_n}\quad \text{ and }\quad \|X_{l_n}\|\ge \|X_{l_0}\|.\] In particular, we deduce from \eqref{useful} that for every $n\ge 0$, every $j<j_{n+1}$,
\begin{equation}
\label{Tj}
\|T^jx\|\ge \|P_{l_{n}}T^jP_{l_{n}}x\|-\frac{1}{4}\|X_{l_{n}}\|.
\end{equation}

%By repeating these arguments, we obtain a sequence of integers $(j_n)_{n\ge 1}$ and an increasing sequence of integers $(l_n)_{n\ge 0}$ such that for every $n\ge 1$, we have $j_n=\min\{j\ge 0:\sum_{l>l_{n-1}}\|P_{l_{n-1}}T^jP_lx\|> \frac{1}{4}\|X_{l_{n-1}}\|\}$, $j_n> b_{l_n+1}-b_{l_n}-\delta_{l_n}$ and $\|X_{l_n}\|\ge \|X_{l_0}\|$. In particular, we deduce from \eqref{useful} that for every $n\ge 0$, every $j<j_{n+1}$,
%\begin{equation}
%\label{Tj}
%\|T^jx\|\ge \|P_{l_{n}}T^jP_{l_{n}}x\|-\frac{1}{4}\|X_{l_{n}}\|\ge \|P_{l_{n}}T^jP_{l_{n}}x\|-\frac{1}{4}\|X_{l_{0}} \|.
%\end{equation}

%By repeating these arguments, we obtain a sequence of integers $(j_n)_{n\ge 1}$ and an increasing sequence of integers $(l_n)_{n\ge 0}$ such that for every $n\ge 1$, we have $j_n> b_{l_n+1}-b_{l_n}-\delta_{l_n}$, $\|X_{l_n}\|\ge \|X_{l_0}\|$ and for every $j<j_{n}$
%\begin{equation}
%\label{Tj}
%\|T^jx\|\ge \|P_{l_{n-1}}T^jP_{l_{n-1}}x\|-\frac{1}{4}\|X_{l_{n-1}}\|\ge \|P_{l_{n-1}}T^jP_{l_{n-1}}x\|-\frac{1}{4}\|X_{l_{0}} \|.
%\end{equation}

We remark that $\lim j_n=\infty$ by \eqref{b1}. Therefore, if we consider an integer $k\ge j_1$ and if we let $n_k=\min\{n\ge 0: k<j_{n+1}\}$, then we have $k\in \mathopen[j_{n_k},j_{n_k+1}\mathclose[$ and $n_k\to \infty$. We deduce from Claim~\ref{fhc2} that
\[\frac{\#\{j\le k: \|P_{l_{n_k}}T^{j}P_{l_{n_k}}x\|\ge \frac{\|X_{l_{n_k}}\|}{2}\}}{k+1}\ge 1 -\frac{2\delta_{l_{n_k}}}{k+1}-\frac{2\delta_{l_{n_k}}}{b_{l_{n_k}+1}-b_{l_{n_k}}}.\]
We then get thanks to \eqref{Tj}
\[\frac{\#\{j\le k: \|T^jx\|\ge \frac{\|X_{l_{n_k}}\|}{4}\}}{k+1}\ge 1 -\frac{2\delta_{l_{n_k}}}{k+1}-\frac{2\delta_{l_{n_k}}}{b_{l_{n_k}+1}-b_{l_{n_k}}}.\]
Finally, since $k\ge j_{n_k}> b_{l_{n_k}+1}-b_{l_{n_k}}-\delta_{l_{n_k}}$ and since $\|X_{l_{n_k}}\|\ge \|X_{l_0}\|$, we deduce that 
\[\frac{\#\{j\le k: \|T^jx\|\ge \frac{\|X_{l_0}\|}{4}\}}{k+1}\ge 1 -\frac{2\delta_{l_{n_k}}}{b_{l_{n_k}+1}-b_{l_{n_k}}-\delta_{l_{n_k}}}-\frac{2\delta_{l_{n_k}}}{b_{l_{n_k}+1}-b_{l_{n_k}}}\]
and, using \eqref{b2}, we get
\[\underline{\text{dens}}\Big\{j\ge 0: \|T^jx\|\ge \frac{\|X_{l_0}\|}{4}\Big\}=1.\]
\end{proof}
\newpage
\begin{claim}\label{Ufhc}
$T$ is not $\U$-frequently hypercyclic.
\end{claim}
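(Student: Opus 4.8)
The plan is to prove the contrapositive, that \emph{no} vector is $\U$-frequently hypercyclic, by showing that for every hypercyclic vector $x$ there is a neighbourhood of $0$ whose return set has upper density $0$. Since every $\U$-frequently hypercyclic vector is in particular hypercyclic, this suffices. The engine of the argument is Claim~\ref{prelim}: if I can verify its hypothesis for an arbitrary hypercyclic $x$, then I immediately obtain an $l_0\ge 0$ with $\|X_{l_0}\|>0$ and $\underline{\text{dens}}\{j\ge 0:\|T^jx\|\ge \|X_{l_0}\|/4\}=1$, from which the conclusion follows by passing to complements.

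The heart of the matter, and the step I expect to be the main obstacle, is to check that a hypercyclic $x$ satisfies the hypothesis of Claim~\ref{prelim}, namely that for every $n$ with $\|X_n\|>0$ there is some $j\ge 0$ with $\sum_{l>n}\|P_nT^jP_lx\|>\|X_n\|/4$. I would argue by contradiction: suppose that for some $n$ with $\|X_n\|>0$ one has $\sum_{l>n}\|P_nT^jP_lx\|\le \|X_n\|/4$ for every $j\ge 0$. Writing $P_nT^jx=P_nT^jP_nx+\sum_{l>n}P_nT^jP_lx$ (recall that $P_nT^jP_lx=0$ when $l<n$), the first term is periodic in $j$, because $P_nx$ is a finite sequence supported in the block $\mathopen[b_n,b_{n+1}\mathclose[$ and is therefore a periodic point by Claim~\ref{claimper2}; hence $j\mapsto P_nT^jP_nx$ takes only finitely many values and is in particular bounded, say by some constant $C_0$. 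Consequently $\|P_nT^jx\|\le C_0+\|X_n\|/4$ for every $j\ge 0$. Choosing a target vector $y$ with $\|P_ny\|$ strictly larger than $C_0+\|X_n\|/4$ then forces $\|y-T^jx\|\ge \|P_n(y-T^jx)\|\ge \|P_ny\|-\|P_nT^jx\|$ to stay bounded away from $0$ for all $j$, so $y$ is not a limit point of the orbit of $x$. This contradicts the density of the orbit, i.e. the hypercyclicity of $x$, and thereby establishes the hypothesis.

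Once the hypothesis is verified, I would invoke Claim~\ref{prelim} to obtain $l_0\ge 0$ with $\|X_{l_0}\|>0$ and $\underline{\text{dens}}\{j\ge 0:\|T^jx\|\ge \|X_{l_0}\|/4\}=1$. Setting $\varepsilon:=\|X_{l_0}\|/4>0$ and $U:=\{v\in\ell^1:\|v\|<\varepsilon\}$, the return set $N(x,U)=\{j\ge 0:\|T^jx\|<\varepsilon\}$ is contained in the complement of a set of lower density $1$, whence $\overline{\text{dens}}(N(x,U))=0$. Therefore $x$ is not $\U$-frequently hypercyclic. As $x$ was an arbitrary hypercyclic vector and every $\U$-frequently hypercyclic vector is hypercyclic, $T$ admits no $\U$-frequently hypercyclic vector, which is exactly the assertion of the claim.
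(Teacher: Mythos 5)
Your proposal is correct and follows essentially the same route as the paper: both verify the hypothesis of Claim~\ref{prelim} by exploiting that $\sup_{j\ge 0}\|P_nT^jP_nx\|$ is finite (since $P_nx$ is periodic by Claim~\ref{claimper2}) together with the fact that hypercyclicity forces $\|P_nT^jx\|$ to exceed any bound, and then pass to complements of a lower-density-one set. The only difference is cosmetic: the paper directly exhibits a $j$ with $T^jx$ close to $\bigl(K+\tfrac{1}{2}\|X_n\|\bigr)e_{b_n}$, whereas you reach the same conclusion by contradiction, noting that bounded $P_n$-projections of the orbit would exclude a suitable target $y$ from its closure.
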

\begin{proof}
Let $x$ be a hypercyclic vector for $T$. We first show that for every $n\ge 0$ there exists $j\ge 0$ such that
\[\sum_{l>n}\|P_nT^jP_lx\|> \frac{\|X_{n}\|}{4}.\] 
Let $n\ge 0$ and $K:=\sup_{j\ge 0}\|P_nT^jP_nx\|$, which is finite since $P_nx$ is periodic.  If we consider $j\ge 0$ such that $\|T^jx-(K+\frac{1}{2}\|X_n\|)e_{b_n}\|<\frac{1}{4}\|X_n\|$, then we have
\[\sum_{l>n}\|P_nT^jP_lx\|\ge \|P_nT^jx\|-\|P_nT^jP_nx\|>\Big(K+\frac{\|X_n\|}{4}\Big)-K=\frac{\|X_n\|}{4}.\]

We therefore deduce from Claim~\ref{prelim} that there exists $l_0\ge 0$ such that $\|X_{l_0}\|>0$ and
\[\underline{\text{dens}}\Big\{j\ge 0: \|T^jx\|\ge \frac{\|X_{l_0}\|}{4}\Big\}=1.\]
This means that
\[\overline{\text{dens}}\Big\{j\ge 0: \|T^jx\|< \frac{\|X_{l_0}\|}{4}\Big\}=0.\]
We conclude that $x$ is not a $\mathcal{U}$-frequently hypercyclic vector for $T$ since $\|X_{l_0}\|>0$ and thus that $T$ is not $\mathcal{U}$-frequently hypercyclic.
\end{proof}

\subsection{$T$ is not distributionally chaotic}\label{Tpasdist}

By using Claims~\ref{fhc2} and \ref{prelim}, we can also show the following result.

\begin{claim}\label{cool}
Let $x\in \ell^1$. If $x\ne 0$, then there exists $\tau>0$ such that
\[\underline{\text{\emph{dens}}}\{j\ge 0: \|T^jx\|\ge \tau \}>0.\]
\end{claim}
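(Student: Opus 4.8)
The plan is to reuse the machinery behind Claim~\ref{Ufhc}, but to replace the appeal to hypercyclicity by a clean dichotomy on whether the hypothesis of Claim~\ref{prelim} is satisfied. First I note that since $x\ne 0$, some block $P_nx$ is non-zero, and because $X_n$ is obtained from $P_nx$ by multiplying each coordinate by a positive power of $2$, there is at least one index $n$ with $\|X_n\|>0$. This guarantees that the quantities $\tfrac{\|X_n\|}{4}$ appearing below are genuinely positive.

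Next I would split into two cases according to the hypothesis of Claim~\ref{prelim}. If for every $n\ge 0$ with $\|X_n\|>0$ there exists $j\ge 0$ with $\sum_{l>n}\|P_nT^jP_lx\|>\frac{\|X_n\|}{4}$, then Claim~\ref{prelim} applies verbatim and produces an index $l_0$ with $\|X_{l_0}\|>0$ and $\underline{\text{dens}}\{j\ge 0:\|T^jx\|\ge \frac{\|X_{l_0}\|}{4}\}=1$; taking $\tau=\frac{\|X_{l_0}\|}{4}>0$ then settles this case, since lower density $1$ is in particular positive.

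Otherwise the hypothesis fails, so I can fix a single index $n$ with $\|X_n\|>0$ for which $\sum_{l>n}\|P_nT^jP_lx\|\le \frac{\|X_n\|}{4}$ holds for \emph{all} $j\ge 0$. Combining this with the elementary bound $\|T^jx\|\ge \|P_nT^jP_nx\|-\sum_{l>n}\|P_nT^jP_lx\|$ from \eqref{useful}, I obtain $\|T^jx\|\ge \|P_nT^jP_nx\|-\frac{\|X_n\|}{4}$ for every $j$. Hence every $j$ with $\|P_nT^jP_nx\|\ge \frac{\|X_n\|}{2}$ automatically satisfies $\|T^jx\|\ge \frac{\|X_n\|}{4}$, so that
\[
\Big\{j\ge 0:\|T^jx\|\ge \tfrac{\|X_n\|}{4}\Big\}\supseteq \Big\{j\ge 0:\|P_nT^jP_nx\|\ge \tfrac{\|X_n\|}{2}\Big\}.
\]
Finally I would invoke Claim~\ref{fhc2} with $l=n$ and let $k\to\infty$: the term $\frac{2\delta_n}{k+1}$ vanishes, so the right-hand set has lower density at least $1-\frac{2\delta_n}{b_{n+1}-b_n}$, which is strictly positive by \eqref{b1}. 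Taking $\tau=\frac{\|X_n\|}{4}$ then yields $\underline{\text{dens}}\{j\ge 0:\|T^jx\|\ge \tau\}>0$, as required.

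The only real subtlety — and the step I would be most careful about — is recognizing that the negation of the hypothesis of Claim~\ref{prelim} hands us exactly one ``good'' block $n$ on which the off-diagonal contribution $\sum_{l>n}\|P_nT^jP_lx\|$ stays uniformly below $\frac{\|X_n\|}{4}$. This uniform control is precisely what decouples $\|T^jx\|$ from the higher blocks and lets Claim~\ref{fhc2} govern the density of large values on a single fixed level; everything else is bookkeeping already carried out in the preceding claims.
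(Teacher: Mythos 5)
Your proof is correct and follows essentially the same route as the paper: the same dichotomy on the hypothesis of Claim~\ref{prelim}, with the first case settled by that claim and the second by combining \eqref{useful} with Claim~\ref{fhc2} and \eqref{b1} to get lower density at least $1-\frac{2\delta_n}{b_{n+1}-b_n}>0$. Your preliminary remark that $x\ne 0$ forces some $\|X_n\|>0$ is a sensible explicit addition, but it does not change the argument.
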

\begin{proof}
Let $x\in \ell^1\backslash\{0\}$. If for every $n\ge 0$ with $\|X_n\|>0$, there exists $j\ge 0$ such that
\[\sum_{l>n}\|P_{n}T^jP_{l}x\|> \frac{\|X_{n}\|}{4},\]
then we deduce from Claim~\ref{prelim} that there exists $l_0\ge 0$ such that $\|X_{l_0}\|>0$ and
\[\underline{\text{dens}}\Big\{j\ge 0: \|T^jx\|\ge \frac{\|X_{l_0}\|}{4}\Big\}=1.\]

On the other hand, if there exists $n\ge 0$ with $\|X_n\|>0$ such that for any $j\ge 0$, we have
\[\sum_{l>n}\|P_{n}T^jP_{l}x\|\le \frac{\|X_{n}\|}{4},\]
then we deduce that for any $j\ge 0$
\[\|T^jx\|\ge \|P_{n}T^jP_{n}x\|-\sum_{l>n}\|P_{n}T^jP_lx\|\ge \|P_{n}T^jP_{n}x\|-\frac{\|X_{n}\|}{4}\]
and we conclude by Claim~\ref{fhc2} and \eqref{b1} that 
\begin{align*}
\underline{\text{dens}}\Big\{j\ge 0: \|T^jx\|\ge \frac{\|X_{n}\|}{4}\Big\}&\ge 
\underline{\text{dens}}\Big\{j\ge 0: \|P_{n}T^jP_{n}x\|\ge \frac{\|X_{n}\|}{2}\Big\}\\
&\ge 1 -\frac{2\delta_n}{b_{n+1}-b_n}>0.
\end{align*}
\end{proof}

We can now easily deduce from Claim~\ref{cool} that $T$ is not distributionally chaotic.

\begin{claim}
$T$ is not distributionally chaotic.
\end{claim}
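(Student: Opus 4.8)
The plan is to use Claim~\ref{cool} to contradict the defining property of distributional chaos. Recall that if $T$ were distributionally chaotic, there would exist an uncountable set $\Gamma\subset \ell^1$ and $\varepsilon>0$ such that for every distinct pair $x\ne y$ in $\Gamma$ and every $\tau>0$ one has $\underline{\text{dens}}\{n\ge 0:\|T^nx-T^ny\|<\tau\}=0$. The key observation is that since $T$ is linear, the displacement $T^n x - T^n y$ equals $T^n(x-y)$, so the distribution of the distance between two orbits is entirely governed by the single vector $x-y\in\ell^1\backslash\{0\}$. Thus the whole question reduces to understanding the behaviour of $\|T^n z\|$ for a fixed nonzero $z$, which is exactly what Claim~\ref{cool} controls.

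The main step is therefore to fix any two distinct points $x,y\in\Gamma$ and set $z:=x-y\ne 0$. Applying Claim~\ref{cool} to $z$, I would obtain some $\tau_0>0$ with
\[\underline{\text{dens}}\{n\ge 0:\|T^nz\|\ge \tau_0\}>0.\]
Taking complements inside $[0,N]$, this lower density bound immediately forces
\[\underline{\text{dens}}\{n\ge 0:\|T^nz\|<\tau_0\}<1,\]
and hence $\overline{\text{dens}}\{n\ge 0:\|T^nz\|<\tau_0\}<1$ as well. Rewriting in terms of the original pair, $\overline{\text{dens}}\{n\ge 0:\|T^nx-T^ny\|<\tau_0\}<1$. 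But the definition of distributional chaos demands that this upper density equal $1$ for \emph{every} choice of $\tau>0$, in particular for $\tau=\tau_0$. This is the contradiction.

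I would conclude that no such uncountable scrambled set $\Gamma$ can exist, so $T$ is not distributionally chaotic. The argument is essentially a one-line deduction from Claim~\ref{cool}, and I expect the only point requiring a little care to be the bookkeeping with densities: one must note that a set of positive lower density cannot have a complement of full upper density, and that the relevant $\tau$ in the distributional-chaos definition ranges over all positive reals, so exhibiting a single offending value $\tau_0$ suffices to break the condition. There is no real obstacle here; the genuine work has already been carried out in establishing Claim~\ref{cool}, and linearity does the rest by collapsing the two-point condition into a statement about one vector.
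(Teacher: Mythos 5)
Your argument is correct and is essentially the paper's proof: both reduce by linearity to the single nonzero vector $z=x-y$ and apply Claim~\ref{cool} to contradict the requirement that $\overline{\text{dens}}\{n\ge 0:\|T^nx-T^ny\|<\tau\}=1$ for every $\tau>0$. One small wording slip: the intermediate inference from $\underline{\text{dens}}\{n\ge 0:\|T^nz\|<\tau_0\}<1$ to $\overline{\text{dens}}\{n\ge 0:\|T^nz\|<\tau_0\}<1$ is invalid as stated (upper density dominates lower density), but the correct one-line step, which you do articulate at the end, is $\overline{\text{dens}}(A^c)=1-\underline{\text{dens}}(A)<1$ for $A=\{n\ge 0:\|T^nz\|\ge\tau_0\}$.
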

\begin{proof}
Assume that $T$ is distributionally chaotic. By definition of distributional chaos,
there then exists an uncountable subset $\Gamma\subset X$ such that for every $x,y\in \Gamma$, $x\ne y$, for every $\tau>0$, we have
\[\overline{\text{dens}}\{n\ge 0:\|T^nx-T^ny\|<\tau\}=1.\]
In particular, it means that there exists $x\ne y$ such that for every $\tau>0$
\[\underline{\text{dens}}\{n\ge 0:\|T^n(x-y)\|\ge\tau\}=0,\]
which contradicts Claim~\ref{cool}.
\end{proof}

\section{Conclusion and remarks}

In view of the obtained results in this paper, we can summarize the links between the main notions in Linear Dynamics as depicted in Figure~\ref{fig}.

\begin{figure}[H]
    \begin{center}
      \begin{tikzpicture}[xscale=1,yscale=1.5,>=stealth',shorten >=1pt]
%        \everymath{\scriptstyle}
       
        \path (2,5) node[] (q00) {(1) Frequently hypercyclic};
        \path (2,4.2) node[] (q0) {(3) $\U$-frequently hypercyclic};
        \path (0,3.4) node[] (q1) {(4) reiteratively hypercyclic};

        \path (-2,4.2) node[] (q2) {(2) Chaotic};
        \path (4,3.4) node[] (q2lm) {(5) Mixing};

        \path (2,2.6) node[] (q4) {(6) Weakly mixing};
        \path (2,1.8) node[] (q5) {(7) Hypercyclic};

        \draw[double,arrows=->] (q00) -- (q0);
        \draw[double,arrows=->] (q0) -- (q1);

        \draw[double,arrows=->] (q1) -- (q4);
        \draw[double,arrows=->] (q2lm) -- (q4);

        \draw[double,arrows=->] (q2) -- (q1);
        \draw[double,arrows=->] (q4) -- (q5);

        \end{tikzpicture}
    \end{center}
    \caption{Links between the different notions in Linear Dynamics}
\label{fig}
\end{figure}
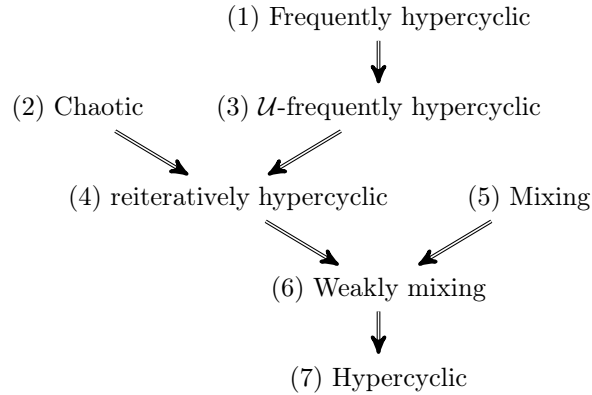

Indeed, the implication $(4)  \Rightarrow (6)$ has been proved in \cite{Bes}, the implication ${(2)\Rightarrow (4)}$ follows from Theorem~\ref{chaosreit} and each of the other implications is obvious by definition. Moreover, there are no other implications since there exist:
\begin{itemize}

\item a weakly mixing weighted shift on $\ell^p$ which is not mixing~\cite{Costakis},
\item a chaotic operator which is not mixing~\cite{Badea},
\item a frequently hypercyclic weighted shift on $c_0$ which is neither chaotic nor mixing~\cite{BGrivaux},
\item a hypercyclic operator which is not weakly mixing~\cite{2Read}.
\item a $\U$-frequently hypercyclic weighted shift on $c_0$ which is not frequently hypercyclic~\cite{BayartR},
\item a reiteratively hypercyclic weighted shift on $c_0$ which is not $\U$-frequently hypercyclic~\cite{Bes}
\item a mixing weighted shift on $\ell^p$ which is not reiteratively hypercyclic \cite{Bes},
\item a chaotic operator which is not $\U$-frequently hypercyclic (Theorem~\ref{chaosfhc}).
\end{itemize}

On the other hand, thanks to Bonet and Peris~\cite{4Bonet2}, we know that every separable infinite-dimensional Fréchet space supports a hypercyclic operator and even a mixing operator~\cite{4Grivaux}. However, the situation is different for chaos and frequent hypercyclicity. Indeed, there exist separable infinite-dimensional Banach spaces which support no chaotic operator~\cite{Bonet} and no frequently hypercyclic operator~\cite{Shkarin}. Theorem~\ref{chaosfhc} then leads to the following question.

\begin{question}
Does there exist a separable infinite-dimensional Banach space which supports a chaotic operator and no frequently hypercyclic operator?
\end{question}

One can also wonder:

\begin{question}
On which spaces does there exist a chaotic operator which is not $\mathcal{U}$-frequently hypercyclic/frequently hypercyclic/distributionally chaotic?
\end{question}

We already know that there exists a chaotic operator $T$ on $\ell^1$ which is neither $\U$-frequently hypercyclic nor distributionally chaotic (Theorem~\ref{chaosfhc}) and we can show that there also exists such an operator on $c_0$ and on $\ell^p$ for every $p\in [1,\infty[$. Indeed, the operator $T$ considered in Theorem~\ref{chaosfhc} is in fact a continuous operator on $c_0$ and on $l^p$ for every $p\in [1,\infty[$ and we deduce from Claims~\ref{claimper},~\ref{claimper2},~\ref{hyp0}~and~\ref{hyp} that $T$ is also a chaotic operator on $c_0$ and on $l^p$ for every $p\in [1,\infty[$. Moreover, since for every $l\ge 0$, we have $\|X_l\|_1\le (b_{l+1}-b_l)\|X_l\|_\infty$ and $\|P_l x\|_1\le (b_{l+1}-b_l)\|P_l x\|_\infty$, Claims~\ref{fhc0}~and~\ref{fhc1} remain true on $c_0$ and on $l^p$ for every $p\in [1,\infty[$ if we replace \eqref{deltatau2} by 
\begin{equation}
\frac{2^{\delta_{n-1}}(b_{n+1}-b_n)}{2^{\tau_n}}\le \frac{1}{2^{2(n+1)}}.
\label{41}
\end{equation}
We can then show that if $T$ satisfies \eqref{41}, $T$ is neither $\U$-frequently hypercyclic nor distributionally chaotic on $c_0$ and on $l^p$. Moreover, we remark that \eqref{41} is satisfied if we consider, as previously, for every $n\ge 1$
\[\tau_n= 4^{n+1},\quad \delta_n=2 \tau_n\quad\text{and}\quad b_{n}-b_{n-1}=4^{2n+1}.\]

\end{document}